\newcommand{\pK}{\partial K}
\newcommand{\tx}{\tilde{\xi}}
\newtheorem{remark}{Remark}[section]
\newtheorem{assumption}{\sc Assumption}[section]
\title{A resourceful splitting technique with applications to deterministic and stochastic multiscale finite element methods
\thanks{The work is funded by the Department of Energy at Los Alamos National Laboratory
under contracts  DE-AC52-06NA25396 and the DOE Office of Science
Advanced Computing Research (ASCR) program in Applied Mathematical Sciences.
}}
\author{ L. Jiang\thanks{Applied Mathematics and Plasma Physics,
Los Alamos National Laboratory, NM 87545 ({\tt ljiang@lanl.gov}).}
        \and M. Presho\thanks{Department of Mathematics,
Colorado State University, Fort Collins, CO 80523
 ({\tt presho@math.colostate.edu}). }
}
\begin{document}

\maketitle

\begin{abstract}
In this paper we use a  splitting technique to develop  new multiscale basis functions for the multiscale finite element method (MsFEM).  The multiscale basis functions
are  iteratively generated using a Green's kernel. The Green's kernel is based on the first  differential operator of the splitting.  The proposed MsFEM is applied to deterministic elliptic equations and stochastic elliptic equations, and we show that the proposed MsFEM can considerably reduce the dimension of the random parameter space for stochastic problems. By combining the method with sparse grid collocation methods, the need for a prohibitive number of deterministic solves is alleviated. We rigorously analyze the convergence of the proposed method for both deterministic and stochastic elliptic equations. Computational complexity discussions are also offered to supplement the convergence analysis.  A number of numerical results are presented to confirm the theoretical findings.
\end{abstract}

\begin{keywords}
multiscale finite element methods, Green's function, stochastic elliptic equations, reduction of parameter space dimension
\end{keywords}

\begin{AMS}
65N15, 65N30, 65N99
\end{AMS}

\pagestyle{myheadings}

\thispagestyle{plain}
\markboth{L. Jiang and M. Presho}{A splitting technique with applications to  MsFEMs}

\section{Introduction}
Many fundamental and practical scientific problems involve a wide range of length scales. Typical examples may
include subsurface flows and geophysical domains with microscopic structures. Because there exist both natural randomness and lack of knowledge about the physical properties, it is often necessary to incorporate uncertainties into the model inputs. One way to address the uncertainties is to model the random inputs as a random field/process, and in turn, such problems are often modeled as stochastic partial differential equations (SPDEs). Then the model's output can be accurately predicted by efficiently solving the associated  SPDEs. It is challenging to solve the SPDEs when the random inputs vary over multiple scales in space and contain inherent uncertainties. The interest in developing stochastic multiscale methods for the SPDEs has steadily grown in recent years (see e.g., \cite{deh08, gz09, gmp10, jml10, wheeler}).

Let  $\Omega$  be a set of outcomes and $D$ be a bounded domain in $\mathbb{R}^d$ with a Lipschitz boundary.
We consider the stochastic elliptic boundary value problem:
seek a random field $u(x, \omega): \bar{D}\times \Omega\longrightarrow \mathbb{R}$  such that
$u(x, \omega)$  almost surely (a.s) satisfies the following equation
\begin{eqnarray}
\label{model-spde}
\begin{cases}
\begin{split}
-\nabla \cdot (k(x,\omega)\nabla u(x,\omega))&= f(x)  \ \ \text{in} \ \  D\\
u(x,\omega)&=0  \ \ \text{on} \ \  \partial D,
\end{split}
\end{cases}
\end{eqnarray}
where $k(x,\omega)$ is a scalar random field.
In particular, we assume that $k(x, \omega)$ exhibits  heterogeneity in multiple
scales over space. Since $k(x,\omega)$ varies over different spatial scales, resolving the finest scale  is not computationally feasible. Thus, we use
 multiscale methods. In practice, a high dimensional random field can be used to approximate the stochastic field $k(x, \omega)$, yet computing the statistical  output quantities of interest remains a difficult task.

During the last decade several multiscale methods have been developed, see e.g.~\cite{akl06, ab05, mortar, bl11,ee03,Hou, HughesII,jennylt03}.
The idea of multiscale methods is to divide the fine scale field into many local sub-problems and solve these in order to form a global coarse scale equation. This
leads to a coarse scale equation in which the fine scale effects are taken into account. One such multiscale method is the Multiscale Finite Element Method (MsFEM) \cite{Hou}. The main idea of MsFEM is to incorporate the small-scale information into the finite element basis functions and capture their effects  on the large scale through the discrete variational formulation. In many cases, the multiscale basis functions can be pre-computed  and used repeatedly in subsequent computations with different source terms, boundary conditions and even  modified coefficients.

The goal in this paper is to quantify the uncertainty through computing the statistical moments (e.g., expectation and variance) of the stochastic solution.
We note that the variance of the solution gives a measure for confidence of the solution expectation. Numerical solution strategies for stochastic PDEs generally  follow three main steps. First, the random inputs are approximately parameterized by a finite number of random variables. This can be achieved by a truncated Karhunen-Lo\`{e}ve expansion and/or truncated polynomial chaos expansion \cite{xiu10}. Second, a numerical approximation for the resulting high-dimensional deterministic PDE is used to approximate the solution with the respective input parameters. Finally, the solution is reconstructed as a random field and the statistical quantities of interest are computed. The second step is most difficult because the PDEs involve high-dimensional random parameter inputs. There exist many  methods for the second step. A broad survey of these methods can be found in \cite{Kleiber, xiu10}. Among these methods, Monte Carlo and stochastic collocation methods have been extensively studied and widely used. Monte Carlo methods and  stochastic collocation methods generate completely decoupled systems, each of which is the same size as the deterministic system.  This is suitable
for parallel computing and amendable  for relatively high-dimensional random inputs. In a Monte Carlo context, a large number of samples are randomly chosen and separate solves for each of the samples are used to determine the statistical behavior of solutions. However, a limitation is that convergence of Monte Carlo methods is usually slow.  Unlike Monte Carlo methods, stochastic collocation requires independent solves at fixed collocation points which are specifically chosen. In turn, this type of method has the capability to provide better accuracy than Monte Carlo with a fewer number of realizations. Moreover,  to overcome the {\it curse of dimensionality} imposed by high-dimensional input parameters, we can use Smolyak sparse  grids (see e.g., \cite{bnr00,ntw08,sm63}) to reduce the number of collocation points. In this paper, we consider the Monte Carlo method and the Smolyak sparse grid collocation method for stochastic approximation.

In the paper, we consider both multiscale features and uncertainties simultaneously. A main focus is the use of a resourceful splitting technique to compute MsFEM basis functions. For the problem (\ref{model-spde}), we assume that the coefficient $k$ can be split into two parts, $k=k_0 + k_1$. We then construct Green's functions using the differential operator associated with $k_0$.  The Green's functions are used to construct a sequence of multiscale ``bubble functions,'' which are employed to build the multiscale basis functions for MsFEM in an iterative manner. The Green's function technique provides an modified framework to compute the bubble functions, and is suitable for parallel computing due to the independent construction. The splitting of $k$ is flexible and can be easily controlled to lead to fast convergence of the bubble function sequence. Compared to standard MsFEM \cite{Hou}, the proposed MsFEM approach can accurately approximate multiscale solutions. The new multiscale approach is applied to SPDEs and may result in  new stochastic multiscale methods. Since the Green's functions essentially generate the MsFEM basis functions, this will reduce the dimension of random parameter space if the dimension of the random field for $k_0$ is smaller than that of the random field for $k$.  We note that using Karhunen-Lo\`{e}ve expansion or polynomial chaos expansion
usually yields an inherent splitting. The new MsFEM can efficiently solve SPDEs with high-dimensional input parameters, and combining the approach with sparse grid collocation methods alleviates the need for a prohibitive number of deterministic solves. We present convergence analysis of the proposed  MsFEM approach for deterministic elliptic equations and stochastic elliptic equations. Complexity analysis is also presented for deterministic MsFEM basis functions and stochastic MsFEM basis functions.

The rest of the paper is organized as follows. In Section $2$ we present the splitting technique which is used to compute the new MsFEM basis functions for  deterministic elliptic PDEs, and provide the associated computational algorithm. In Section $3$,  convergence analysis is rigorously derived for deterministic elliptic PDEs. Section $4$ is devoted to the applications  to  stochastic elliptic PDEs. We present convergence analysis and complexity analysis using  stochastic collocation methods in the section. In Section $5$, a number of numerical examples are presented to confirm the theoretical results. Some conclusions and closing remarks are made in Section $6$.

\section{A new approach for  MsFEM basis functions}
We consider the deterministic elliptic equation
\begin{eqnarray}
\label{model-eq}
\begin{cases}
\begin{split}
-\nabla \cdot (k\nabla u)&=f \quad \text{in}  \quad D\\
u&=0 \quad \text {on}   \quad \partial D,
\end{split}
\end{cases}
\end{eqnarray}
where  $k$ is a heterogenous scalar function. We note that our method can immediately be extended to the case of tensor coefficient function.
We assume that $k$ admits the splitting,
\begin{equation}
\label{k-split}
k = k_0 + k_1,
\end{equation}
where  $k(x)$ and $k_0(x)$ are bounded below and above, specifically,
\[
0<a_0 \leq k(x) \leq a_1, \quad 0<b_0 \leq k_0(x)\leq b_1  \quad \forall x\in D.
\]
Here, $k_0$ often represents the coarse scale information of $k$, and $k_1$ the fine scale information of $k$.

The multiscale finite element method (MsFEM) for Eq.~(\ref{model-eq}) was introduced in \cite{Hou} and  further analyzed in \cite{HouII}.
The key ingredient of MsFEM is the construction of an appropriate multiscale finite dimensional space in which the solution is sought. In particular, the fine scale heterogeneity in $k$ will be imbedded in this finite dimensional space. This information is incorporated into the coarse scale formulation through the coarse scale stiffness matrix. In this section, we develop a MsFEM basis function, which is constructed in a different way from the previous works (e.g., \cite{bco94, Hou}).

We introduce some notation for presentation. $L^p(D)$  ($1\leq p\leq \infty$) denotes the Lebesgue space. The norm of $L^2(D)$ is denoted by $\|\cdot\|_{0, D}$.
 $H^{1}(D)$ is the usual Sobolev space equipped with norm $\|\cdot\|_{1, D}$ and seminorm $|\cdot|_{1,D}$.
In the paper, $(\cdot, \cdot)$ is the usual $L^2$ inner product. We define an energy norm on a sub-domain $D'$ by $|||v|||_{D'}^2 :=(k\nabla v, \nabla v)_{D'}=\|\sqrt{k}\nabla v\|_{0,D'}^2$.
If $D'=D$, then $|||\cdot|||$ simply represents $|||\cdot|||_{D}$. We let $\mathfrak{T}_h$ be a quasi-uniform partition of $\Omega$ and $K$ be a representative coarse mesh with $\text{diam}(K)=h_K$. Let $h=\max\{h_K, K\in \mathfrak{T}_h\}$.


\subsection{Series approximation of multiscale basis functions}

Following \cite{Hou}, we define the standard multiscale basis functions by $\phi_{K, i}$ for vertices
$i=1,\dots,d$ of coarse cell $K$, which satisfy
\begin{eqnarray}
\label{MsFE-basis1}
\begin{cases}
\begin{split}
-\nabla \cdot  (k\nabla \phi_{K,i}) &= 0  \quad \text{in}  \quad K\\
\phi_{K,i}  &= l_{K,i}|_{\partial K}  \quad \text {on}   \quad \partial K,
\end{split}
\end{cases}
\end{eqnarray}
where $l_{K,i}$ is the boundary condition  associated with node $i$.
There exist some options for the boundary condition $l_{K,i}$ (see \cite{HouII,victor, jeg07}). Eq. $(\ref{MsFE-basis1})$
defines basis functions for local MsFEM if $l_{K,i}$ is a linear/bilinear  function. Incorporating global
information into $l_{K, i}$ produces global MsFEM \cite{jeg07}. We define the finite element  space for the standard MsFEM by
\[
V_h=\text{span} \{ \phi_{K,i}: i=1,...,d; K\in \mathfrak{T}_h \}.
\]
We note  that the idea of using basis functions satisfying certain
differential equations has been used before, see e.g.~ \cite{bco94,jem10}
and the references therein. Since we discuss a generic multiscale basis function, hereafter, we will remove the subindex $K$ and $i$ from  (\ref{MsFE-basis1}) for simplicity of presentation.

Next we use the splitting (\ref{k-split}) to derive a new MsFEM basis function. On each coarse cell $K\in \mathfrak{T}_h$, we define a projection operator $\Pi: H^1(K)\longrightarrow H_0^1(K)$ by
\begin{equation}
\label{def-pi}
(k_0\nabla \Pi v, \nabla w)=(k_0\nabla v, \nabla w)  \quad  \text{$\forall v\in H^1(K)$  and  $\forall w\in H_0^1(K)$}.
\end{equation}
The definition of $\Pi$ implies $\|\sqrt{k_0}  \nabla \Pi v\|_{0, K} \leq \|\sqrt{k_0} \nabla v\|_{0,K}$.

We extend the function for the boundary condition in (\ref{MsFE-basis1}) onto $K$ and denote the extended function by $l$.
 Let $\phi=(I-\Pi)l+\xi$, where $I$ is the identity operator. Then by (\ref{MsFE-basis1})  we can derive an equation for $\xi$
\begin{eqnarray}
\label{eq-xi}
\begin{cases}
\begin{split}
-\nabla \cdot (k\nabla \xi) &=-\nabla \cdot  (k_1 \nabla (\Pi-I) l) \quad \text{in}  \quad K\\
\xi &=0  \quad \text {on}   \quad \pK.
\end{split}
\end{cases}
\end{eqnarray}
We are going to construct a series to approximate $\xi$.
To this end, we set $\tilde{\xi}_0$ to satisfy
\begin{eqnarray}
\label{eq-xi0}
\begin{cases}
\begin{split}
-\nabla \cdot (k_0 \nabla \tilde{\xi}_0 ) &=-\nabla \cdot  (k_1 \nabla (\Pi-I) l) \quad \text{in}  \quad K\\
\tilde{\xi}_0 &=0  \quad \text {on}   \quad \pK.
\end{split}
\end{cases}
\end{eqnarray}
Then we recursively define a sequence of function $\tx_j$, $j=1,2,3,\cdots $  which satisfies
\begin{eqnarray}
\label{eq-xi-j}
\begin{cases}
\begin{split}
-\nabla \cdot (k_0\nabla \tx_j ) &=\nabla \cdot  (k_1 \nabla \tx_{j-1}) \quad \text{in}  \quad K\\
\tx_j  &=0  \quad \text {on}   \quad \pK.
\end{split}
\end{cases}
\end{eqnarray}
The function $\Pi l$ and the sequence $\{\tx_j\}$ are ``bubble functions" containing microstructure information, which are localized to a coarse cell by imposing zero
Dirichlet boundary conditions.
Let $\xi_J=\sum_{j=0}^J \tx_j$. We define the new multiscale basis function $\phi_J:=(I-\Pi)l+\xi_J$ and
the finite element  space for the new MsFEM  by
\[
V_{J, h}=\text{span} \{ (\phi_J)_{K,i}: i=1,...,d; K\in \mathfrak{T}_h \}.
\]


\subsection{Computational approach for the new MsFEM basis functions}
\label{eff-comp}

Since the proposed  multiscale basis function is defined as $\phi_J=(I-\Pi)l+\sum_{j=0}^J \tx_j$,
the computation of $\phi_J$  depends on the construction of $\Pi l$ and the bubble sequence $\{\tx_j\}_{j=0}^J$.
We find that $\Pi l$ and $\tx_j$ ($j=0, \cdots, J$) are all associated with the differential operator $\mathcal{L}_0:=-\nabla \cdot k_0 \nabla$.
Moreover, $\Pi l$ or $\tx_j$ ($j=0, \cdots, J$) can be formally written as $\mathcal{L}_0^{-1} \tilde{f}$, where $\tilde{f}$ is
the source term in the equation on $\Pi l$ or $\tx_j$.  It is well-known that Green's function can be viewed as generalized inverses
of differential operators.  We use Green's functions to obtain $\mathcal{L}_0^{-1}$ in the present work.

As far as making an efficient implementation, we use the Green's
function $G(x, y)$ for the operator $\mathcal{L}_0$. The Green's function $G(x,y)$ solves the equation
\begin{eqnarray}
\label{eq-green}
\begin{cases}
\begin{split}
-\nabla \cdot  ( k_0 \nabla G(x,y) ) &= \delta(x,y) \quad \text{in}  \quad K \\
G(x,y) &= 0 \quad \text {on}   \quad \pK.
\end{split}
\end{cases}
\end{eqnarray}
Since the Green's function $G(x, y)$ offers the fundamental solution for the differential operator
$\mathcal{L}_0$,  the Green's function $G(x, y)$ can efficiently generate $\Pi l$ and $\tx_j$ ($j=0, \cdots, J$).

By Eq.~(\ref{def-pi}), we have
\begin{equation}
\label{pi-l}
\Pi l (x)=-\int_K \, G(x,y)  \nabla_y \cdot  ( k_0 \nabla_y l) dy=\int_K k_0 \nabla_y G(x,y)\cdot \nabla_y l dy.
\end{equation}
Then we similarly  compute $\tilde{\xi}_0$ by performing
\[
\tilde{\xi}_0(x) = -\int_K \, G(x,y)  \nabla_y \cdot  \big( k_1  \nabla_y(\Pi-I) l\big) dy
= \int_K \, k_1 \nabla_y G(x,y) \cdot \big(\nabla_y (\Pi-I)l\big) dy,
\]
and compute $\tilde{\xi}_k$, $k=1,2,3,\cdots$,  by performing
\[
\tx_j(x) = \int_K \, G(x,y) \nabla_y \cdot \big( k_1  \nabla_y \tx_{j-1}\big)  dy
=- \int_K \, k_1 \nabla_y G(x,y) \cdot \nabla_y \tx_{j-1} dy.
\]

To discuss the complexity of computation for the proposed MsFEM basis function, we investigate the computation in terms of matrix operations.
We use the vector function $\vec{b}(x)=(\ell_1(x), \cdots, \ell_{n_K}(x))^T$, where $\ell_p(x)$ ($p=1, \cdots n_K$) is a standard finite element basis function at the underlying  vertex $x_p$ of a underlying fine grid in  $K$. Here $n_K$ is the number of the internal fine vertices in $K$.  We define vectors $v_0$ and $v_1$ by
\begin{equation}
v_0 =\int_K k_0 \nabla \vec{b}\otimes \nabla l dx  \quad \text{and} \quad v_1 = \int_K k_1 \nabla \vec{b} \otimes \nabla l dx  \label{def-v01},
\end{equation}
where $\otimes$ represents the tensor product.
We use $\mathcal{L}_1=-\nabla\cdot k_1\nabla$ to denote the differential operator associated with $k_1$. Let $M_0$ and $M_1$  be the stiffness matrices associated with the operators $\mathcal{L}_0$ and $\mathcal{L}_1$, respectively. Then

\begin{equation}
M_0=\int_K k_0 \nabla \vec{b}\otimes \nabla \vec{b} dx \quad \text{and} \quad M_1=\int_K k_1 \nabla \vec{b}\otimes \nabla \vec{b} dx  \label{def-M01}.
\end{equation}
We have the following theorem to represent $\Pi l$ and $\tx_j$ ($j=0, \cdots, J$) in the finite element space of fine grid.
The notations $\Pi l$ and $\tx_j$ are slightly abused in the following theorem.
\begin{theorem}
\label{thm-v-M}
Let $\Pi l(x)$ and $\tx_j(x)$ ($j=0, \cdots, J$) be the finite element approximations  on the underlying fine grid in $K$. Then
\begin{equation}
\label{pi-l-fe}
\Pi l(x)=(M_0^{-1} \vec{b}(x))^T v_0
\end{equation}
and for $j=0,\cdots, J$,
\begin{equation}
\label{tx-fe}
\tx_j (x)=(-1)^j \big(M_0^{-1} \vec{b}(x)\big)^T (M_1 M_0^{-1})^j (M_1M_0^{-1} v_0-v_1).
\end{equation}
\end{theorem}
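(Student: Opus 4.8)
The plan is to translate each continuous PDE defining the bubble functions into its finite element counterpart on the fine grid in $K$, and then to recognize the resulting linear algebra as a telescoping product involving $M_0^{-1}$ and $M_1$. First I would fix notation: on the fine grid, any function $w\in H_0^1(K)$ restricted to the fine FE space is represented by a coefficient vector $\vec{w}$ through $w(x)=\vec{w}^T\vec{b}(x)$, and the weak form of $-\nabla\cdot(k_0\nabla w)=g$ with homogeneous Dirichlet data becomes $M_0\vec{w}=\vec{g}$, where $\vec{g}_p=(g,\ell_p)$ is the load vector. The key observation is that the right-hand sides appearing in (\ref{eq-xi0}) and (\ref{eq-xi-j}) are all of the form $\nabla\cdot(k_1\nabla(\cdot))$, whose weak form against $\ell_p$ produces exactly $-\int_K k_1\nabla\ell_p\cdot\nabla(\cdot)\,dx$, i.e.\ an application of the matrix $M_1$ to the coefficient vector of the argument. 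Similarly, the definition (\ref{def-pi}) of $\Pi$ tested against $\ell_p$ gives $M_0\,\overrightarrow{\Pi l}=v_0$ since $v_0$ has components $\int_K k_0\nabla\ell_p\cdot\nabla l\,dx$; this immediately yields $\overrightarrow{\Pi l}=M_0^{-1}v_0$, hence (\ref{pi-l-fe}) by multiplying by $\vec{b}(x)^T$ and transposing.

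Next I would compute $\tx_0$. Writing the weak form of (\ref{eq-xi0}) tested against $\ell_p$ and using $\phi=(I-\Pi)l+\xi$, the right-hand side is $-\int_K k_1\nabla\ell_p\cdot\nabla(\Pi-I)l\,dx = \int_K k_1\nabla\ell_p\cdot\nabla(I-\Pi)l\,dx$. The term $\int_K k_1\nabla\ell_p\cdot\nabla l\,dx$ is the $p$-th component of $v_1$, while $\int_K k_1\nabla\ell_p\cdot\nabla(\Pi l)\,dx = (M_1\overrightarrow{\Pi l})_p = (M_1 M_0^{-1}v_0)_p$. Hence $M_0\overrightarrow{\tx_0}=M_1 M_0^{-1}v_0-v_1$, so $\overrightarrow{\tx_0}=M_0^{-1}(M_1M_0^{-1}v_0-v_1)$, which is (\ref{tx-fe}) for $j=0$. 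Then I would proceed by induction: the weak form of (\ref{eq-xi-j}) gives $M_0\overrightarrow{\tx_j} = -M_1\overrightarrow{\tx_{j-1}}$ (the sign coming from the right-hand side $+\nabla\cdot(k_1\nabla\tx_{j-1})$, whose weak form is $-\int_K k_1\nabla\ell_p\cdot\nabla\tx_{j-1}\,dx$). Substituting the inductive hypothesis $\overrightarrow{\tx_{j-1}}=(-1)^{j-1}M_0^{-1}(M_1M_0^{-1})^{j-1}(M_1M_0^{-1}v_0-v_1)$ yields $\overrightarrow{\tx_j}=(-1)^j M_0^{-1}(M_1M_0^{-1})^{j}(M_1M_0^{-1}v_0-v_1)$, and multiplying by $\vec{b}(x)^T$ gives the claimed formula, after using $(M_0^{-1})^T=M_0^{-1}$ and the symmetry of $M_0,M_1$ to move the transpose.

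The only genuinely delicate point — and the step I expect to require the most care — is the bookkeeping of signs and of the order of factors in the noncommutative product $(M_1M_0^{-1})^j$, together with making sure the transpose in the stated formula $(M_0^{-1}\vec{b}(x))^T(M_1M_0^{-1})^j(\cdots)$ is consistent with the scalar $\overrightarrow{\tx_j}^T\vec{b}(x)$; this works out precisely because $M_0$ and $M_1$ are symmetric, so $\big(M_0^{-1}(M_1M_0^{-1})^j\big)^T = (M_0^{-1}M_1)^j M_0^{-1} = M_0^{-1}(M_1M_0^{-1})^j$. Everything else is a routine translation of ``test the strong-form PDE against each fine-grid hat function $\ell_p$ and identify the resulting integrals with entries of $v_0,v_1,M_0,M_1$.'' I would also remark briefly that the Green's function representations derived just above the theorem are the continuous analogue of $M_0^{-1}$, so the fine-grid identities are exactly the discretizations of (\ref{pi-l}) and the formulas for $\tx_j$, which makes the result unsurprising once the matrix dictionary is set up.
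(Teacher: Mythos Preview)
Your argument is correct and complete; the sign and transpose bookkeeping you flag as delicate is handled properly (the one place your narrative slips is the sign in front of $\int_K k_1\nabla\ell_p\cdot\nabla(\Pi-I)l\,dx$ for $\tx_0$, but your final identity $M_0\overrightarrow{\tx_0}=M_1M_0^{-1}v_0-v_1$ is right). Your route, however, differs from the paper's. The paper first writes down the discrete Green's function explicitly as $G(x,y)=(M_0^{-1}\vec b(x))^T\vec b(y)$ and then substitutes this into the integral representations of $\Pi l$ and $\tx_j$ derived just before the theorem, computing $\tx_0$ and $\tx_1$ by hand and stating the general $j$ by repetition. You instead bypass the Green's kernel entirely: you test each PDE against the fine-grid hat functions, read off the linear systems $M_0\overrightarrow{\Pi l}=v_0$ and $M_0\overrightarrow{\tx_j}=-M_1\overrightarrow{\tx_{j-1}}$, and close with a clean induction plus the symmetry observation $(M_0^{-1}(M_1M_0^{-1})^j)^T=M_0^{-1}(M_1M_0^{-1})^j$. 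Your approach is more elementary and arguably tidier as pure linear algebra; the paper's approach has the advantage of making transparent that the discrete formulas are exactly the fine-grid discretizations of the continuous Green's-function integrals, which is the conceptual point of the surrounding section.
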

\begin{proof}
Let us still use  $G(x,y)$ to represent the numerical Green's function on the underlying fine grid.
Then direct calculation implies that
\begin{equation}
\label{comp-G}
G(x, y)=\big(M_0^{-1} \vec{b}(x)\big)^T \vec{b}(y).
\end{equation}
Thanks to Eq.~(\ref{pi-l}) and Eq. ~(\ref{comp-G}), it follows that
\begin{eqnarray}
\label{comp-pi-l}
\begin{split}
\Pi l(x)&=\int_K k_0\nabla_y G(x,y)\cdot \nabla_yl dy=\int_K k_0 \nabla_y \big( M_0^{-1}\vec{b}(x)\big)^T\vec{b}(y)\cdot \nabla_y l dy\\
&= \big( M_0^{-1}\vec{b}(x)\big)^T \int_K k_0\nabla_y \vec{b}\otimes \nabla_y l dy=\big( M_0^{-1}\vec{b}(x)\big)^T v_0.
\end{split}
\end{eqnarray}
For $\tx_0(x)$,  we have
\begin{eqnarray}
\label{comp-tx0}
\begin{split}
\tx_0(x) &=\int_K k_1\nabla_y G(x,y)\cdot \nabla_y \Pi l(y) dy -\int_K k_1\nabla_y G(x,y)\cdot \nabla_y ldy\\
&=\int_K k_1 \nabla_y \big(M_0^{-1}\vec{b}(x)\big)^T \vec{b}(y)\cdot \nabla_y \big(M_0^{-1} \vec{b}(y)\big)^T v_0 dy-\big(M_0^{-1} \vec{b}(x)\big)^T
\int_K k_1\nabla_y \vec{b}(y)\otimes \nabla_y l dy\\
&=\big(M_0^{-1} \vec{b}(x)\big)^T \big [\int_K k_1 \nabla_y \vec{b}(y)\otimes \nabla_y \vec{b}^T(y) dy \big ]M_0^{-1} v_0-\big(M_0^{-1} \vec{b}(x)\big)^T v_1\\
&=\big(M_0^{-1} \vec{b}(x)\big)^T (M_1M_0^{-1} v_0- v_1).
\end{split}
\end{eqnarray}
Using Eq. ~(\ref{comp-tx0}), we obtain
\begin{eqnarray}
\label{comp-tx1}
\begin{split}
\tx_1(x)&=-\int_K k_1 \nabla_y G(x,y)\cdot \nabla_y\tx_0(y)dy\\
&=- \big(M_0^{-1} \vec{b}(x)\big)^T\big[\int_K k_1 \nabla_y \vec{b}(y)\otimes \nabla_y \vec{b}^T (y)dy\big] M_0^{-1}(M_1 M_0^{-1} v_0-v_1)\\
&=- \big(M_0^{-1} \vec{b}(x)\big)^T(M_1 M_0^{-1})(M_1 M_0^{-1} v_0-v_1).
\end{split}
\end{eqnarray}
By repeating the procedure of (\ref{comp-tx1}),  it follows immediately that for $j=2, \cdots, J$,
\[
\tx_j(x)=(-1)^j \big(M_0^{-1} \vec{b}(x)\big)^T(M_1 M_0^{-1})^j(M_1 M_0^{-1} v_0-v_1).
\]
The proof is complete.
\end{proof}

We pre-compute  vectors $v_0$, $v_1$ and matrices $M_0$, $M_1$.  Since $v_0$, $v_1$, $M_0$ and $M_1$ only depend on
the local information in $K$, their construction is suitable for parallel computation.
By Theorem \ref{thm-v-M},  we obtain  $\Pi l(x)$ and $\tx_j(x)$ ($j=0,\cdots, J$)  by performing a direct matrix-vector multiplication.
Moreover, we find that the computations of $\Pi l(x)$ and $\tx_j(x)$ ($j=0,\cdots, J$) are independent of each other  and
suitable for parallel computation as well. By Theorem \ref{thm-v-M}, the numerical representation of $\phi_J$ can be written as
\begin{equation}
\label{comp-new-MsFE}
\phi_J(x)=l(x)- \big(M_0^{-1} \vec{b}(x)\big)^T v_0 +\sum_{j=0}^J (-1)^j \big(M_0^{-1} \vec{b}(x)\big)^T(M_1 M_0^{-1})^j(M_1 M_0^{-1} v_0-v_1).
\end{equation}
If we still use $\phi(x)$ to denote the numerical approximation  of a standard MsFEM basis function in the underlying
fine grid of $K$, then it is easy to show that
\begin{equation}
\label{comp-MsFE}
\phi(x)=l(x)- \big( M^{-1} \vec{b}(x)\big )^T v,
\end{equation}
where $M=\int_K k \nabla \vec{b}\otimes \nabla \vec{b} dx$  and $v=\int_K k \nabla \vec{b}\otimes \nabla l dx$.
Compared Eq. ~(\ref{comp-new-MsFE}) and Eq. ~(\ref{comp-MsFE}), we find that
the computation of $\phi_J (x)$ is comparable to the computation of $\phi(x)$ in a parallel setting
since each of the terms in $\phi_J(x)$ may be obtained independently. When $k_0=k_1$,  Eq. ~(\ref{comp-new-MsFE}) and Eq. ~(\ref{comp-MsFE}) imply that
$\phi_J(x)=\phi(x)$,  which means that the proposed MsFEM coincides with standard MsFEM.


Application of this procedure is summarized in Algorithm \ref{alg:det-msfem}.
\begin{algorithm}[h]\caption{Computing  the proposed  MsFEM basis function $\phi_J$}\label{alg:det-msfem}
\begin{enumerate}
\item Assemble the vectors $v_0$ and $v_1$ by (\ref{def-v01}), and assemble the matrices $M_0$ and $M_1$ by (\ref{def-M01});
\item Compute each term of (\ref{comp-new-MsFE}) independently;
\item Construct the proposed MsFEM  basis function  $\phi_J(x)$ by summing the terms obtained in step (2).
\end{enumerate}
\end{algorithm}

\begin{remark}
Let $\xi$ be the finite element solution (i.e., evaluated at fine vertices) of Eq.~(\ref{eq-xi}). Here the notation $\xi$ is slightly abused. Then by Eq.~(\ref{eq-xi}) we have
$(M_0+M_1)\xi=M_1 M_0^{-1}v_0-v_1$. This gives the Neumann series of $\xi$,
\begin{equation}
\label{xi-neum}
\xi=\sum_{j=0}^{\infty} (-1)^jM_0^{-1}(M_1 M_0^{-1})^j (M_1 M_0^{-1} v_0-v_1).
\end{equation}
Consequently,  the last term in Eq.~(\ref{comp-new-MsFE}) is actually the truncated Neumann series.
By the Neumann series and Eq.~(\ref{comp-new-MsFE}), if the spectral radius of $(M_1 M_0^{-1})$ is less than $1$, then $\phi_J$ is convergent as $J \rightarrow \infty$.
This will be consistent with {Assumption 3.1}.
\end{remark}

\section{Convergence analysis for deterministic coefficient }
\label{converge}
We make the following assumption for convergence analysis.
\begin{assumption}
{\it Assume that the splitting $k=k_0+k_1$ on $K$ satisfies
\begin{equation}
\label{assum1}
\eta_K:=\|{k_1\over k_0}\|_{L^{\infty}(K)}< 1.
\end{equation}
}
\end{assumption}
If the property of the splitting $\|{k_1\over k_0}\|_{L^{\infty}(K)}<1 $ fails,
then we can introduce a slightly modified splitting of $k$ such that the assumption (\ref{assum1}) is satisfied.
As a matter of fact, we have the following proposition.
\begin{proposition} \label{sprop}
Suppose that $\|{k_1\over k_0}\|_{L^{\infty}(K)}\geq 1$. Let constant $s$ satisfy the following inequality
\begin{equation}
\label{s-eq}
s>\sup_{x\in K} \max \left( \frac{k_1(x)-k_0(x)}{2}, -k_0(x) \right).
\end{equation}
Then the modified splitting $k=\tilde{k}_0+\tilde{k}_1:=(k_0+s)+(k_1-s)$
satisfies the assumption (\ref{assum1}) if we switch $(k_0, k_1)$ to $(\tilde{k}_0, \tilde{k}_1)$ in (\ref{assum1}).
\end{proposition}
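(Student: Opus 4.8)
The plan is to verify directly that the new splitting $\tilde k_0 = k_0 + s$, $\tilde k_1 = k_1 - s$ satisfies $\|\tilde k_1/\tilde k_0\|_{L^\infty(K)} < 1$, i.e.\ that $-\tilde k_0(x) < \tilde k_1(x) < \tilde k_0(x)$ for every $x \in K$, since $\tilde k_0$ will be positive. First I would record that $\tilde k_0 + \tilde k_1 = k_0 + k_1 = k$ so the modified pair is indeed a splitting of $k$. Then the pointwise inequality $|\tilde k_1(x)| < \tilde k_0(x)$ is equivalent to the two scalar inequalities $\tilde k_1(x) < \tilde k_0(x)$ and $-\tilde k_1(x) < \tilde k_0(x)$.

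For the first inequality, $\tilde k_1(x) < \tilde k_0(x)$ reads $k_1(x) - s < k_0(x) + s$, i.e.\ $s > \tfrac{k_1(x) - k_0(x)}{2}$, which holds for all $x$ because $s > \sup_{x \in K}\tfrac{k_1(x)-k_0(x)}{2}$ by hypothesis~(\ref{s-eq}). For the second inequality, $-\tilde k_1(x) < \tilde k_0(x)$ reads $s - k_1(x) < k_0(x) + s$, i.e.\ $0 < k_0(x) + k_1(x) = k(x)$, which is automatic from the standing lower bound $0 < a_0 \le k(x)$. (One could also note $-\tilde k_1 \le |\tilde k_1| < \tilde k_0$ follows once $\tilde k_1 < \tilde k_0$ and $\tilde k_1 > -\tilde k_0$ are both in hand, so really only the two displayed inequalities are needed.) Finally I would check $\tilde k_0(x) > 0$: from $s > \sup_{x\in K}(-k_0(x))$ we get $\tilde k_0(x) = k_0(x) + s > 0$ for all $x \in K$, so dividing by $\tilde k_0$ is legitimate and the $L^\infty$ bound $\eta_K = \|\tilde k_1/\tilde k_0\|_{L^\infty(K)} < 1$ follows from the strict pointwise bounds together with the boundedness of $k_0, k_1$ on $K$ (which guarantees the supremum is attained strictly below $1$, not merely bounded by it in the limit).

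There is essentially no serious obstacle here; the statement is an elementary algebraic rearrangement. The only mild subtlety worth a sentence is passing from the strict \emph{pointwise} inequalities to a strict bound on the $L^\infty$ \emph{norm}: strictness of the pointwise ratio does not by itself force the essential supremum to be strictly below $1$ on a general domain. This is handled by the uniform upper/lower bounds assumed on $k$ and $k_0$ (hence on $k_1 = k - k_0$), which make $\tfrac{k_1(x)-k_0(x)}{2}$ a bounded function of $x$; taking $s$ strictly larger than its supremum yields a uniform gap, so $\sup_x \tfrac{\tilde k_1(x)}{\tilde k_0(x)} \le \tfrac{\sup_x(k_1 - k_0)/2 + \text{const}}{\inf_x(k_0+s)} < 1$, and similarly for the lower bound. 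I would state this quantitatively if a reader wants the explicit value of $\eta_K$, but for the purposes of verifying Assumption~\ref{assum1} the qualitative strict inequality suffices.
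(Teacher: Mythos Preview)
Your proof is correct and is precisely the ``direct calculation'' that the paper invokes without further detail; the paper's own proof consists of a single sentence asserting the conclusion follows by direct calculation, so you have supplied the argument the authors omitted. Your attention to the passage from strict pointwise inequality to strict $L^\infty$ bound (via the uniform gap produced by $s$ strictly exceeding the supremum in \eqref{s-eq}) is a nice touch that the paper does not make explicit.
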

\begin{proof}
If $\|{k_1\over k_0}\|_{L^{\infty}(K)}\geq 1$ and (\ref{s-eq})  holds, then direct calculation implies that
$\|\frac{\tilde{k}_1}{\tilde{k}_0}\|_{L^{\infty}(K)}< 1$. Consequently,
the assumption (\ref{assum1}) is satisfied.
\end{proof}

We note that the modified splitting, $k=\tilde{k}_0+\tilde{k}_1$,  is essentially the same (up to a constant) as the original splitting, $k=k_0+k_1$.

Now we provide the main convergence result for deterministic MsFEMs.
\begin{theorem}
\label{det-thm}
Define $\tilde{c}=\sqrt{2}\max_{K\in \mathfrak{T}_h} \|\sqrt{k_0\over k}\|_{L^{\infty}(K)}$ and
 $\eta=\max_{K\in \mathfrak{T}_h} \eta_K< 1$.     Let $u_h\in V_h$ and $u_{J, h}\in V_{J,h}$ be the MsFEM solution for Eq. (\ref{model-eq}). Then \\
(1) $|||u_h-u_{J, h}|||\leq (\tilde{c})^{1/2} \big(\eta^{J+1}+\frac{\eta^{J+1}}{1-\eta^{J+1}}\big )^{1/2} |||u|||$.\\
(2) $|||u-u_{J, h}|||\leq  \sqrt{2}\tilde{c} \big(\eta^{J+1}+\frac{\eta^{J+1}}{1-\eta^{J+1}}\big ) |||u||| + 2|||u-u_h|||$.
\end{theorem}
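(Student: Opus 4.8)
The plan is to estimate the difference of the two multiscale basis functions $\phi$ and $\phi_J$ on each coarse cell $K$ in the energy norm, then transfer the bound to the Galerkin solutions via Céa-type arguments. First I would recall that on $K$ the standard basis function satisfies $\phi = (I-\Pi)l + \xi$ with $\xi$ solving \eqref{eq-xi}, and the new one satisfies $\phi_J = (I-\Pi)l + \xi_J$ with $\xi_J = \sum_{j=0}^J \tx_j$; hence $\phi - \phi_J = \xi - \xi_J$, a function in $H_0^1(K)$. So the whole problem reduces to controlling $|||\xi - \xi_J|||_K$. The key algebraic observation is that $-\nabla\cdot(k\nabla \tx_j) = -\nabla\cdot(k_0\nabla\tx_j) - \nabla\cdot(k_1\nabla\tx_j)$; combined with the recursion \eqref{eq-xi-j} the telescoping gives, for the partial sum, $-\nabla\cdot(k\nabla\xi_J) = -\nabla\cdot(k_1\nabla(\Pi-I)l) + \nabla\cdot(k_1\nabla\tx_J)$ in $K$, so that the residual equation for $e_J := \xi - \xi_J$ is $-\nabla\cdot(k\nabla e_J) = -\nabla\cdot(k_1\nabla\tx_J)$ with zero boundary data. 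Testing with $e_J$ gives $|||e_J|||_K^2 = -(k_1\nabla\tx_J,\nabla e_J)_K \le \|\sqrt{k_1/k_0}\|_{L^\infty}\,\|\sqrt{k_0}\nabla\tx_J\|_{0,K}\,|||e_J|||_K$, using $\sqrt{k_1}=\sqrt{k_1/k_0}\sqrt{k_0}$ and $\sqrt{k_1}\le\sqrt k$ in the other factor. This isolates $\|\sqrt{k_0}\nabla\tx_J\|_{0,K}$ as the quantity to control.

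Next I would set up the geometric decay of the bubble sequence. From \eqref{eq-xi-j}, testing with $\tx_j$ yields $\|\sqrt{k_0}\nabla\tx_j\|_{0,K}^2 = -(k_1\nabla\tx_{j-1},\nabla\tx_j)_K \le \eta_K \|\sqrt{k_0}\nabla\tx_{j-1}\|_{0,K}\,\|\sqrt{k_0}\nabla\tx_j\|_{0,K}$, so $\|\sqrt{k_0}\nabla\tx_j\|_{0,K}\le \eta_K\|\sqrt{k_0}\nabla\tx_{j-1}\|_{0,K}$, giving $\|\sqrt{k_0}\nabla\tx_j\|_{0,K}\le \eta_K^{\,j}\|\sqrt{k_0}\nabla\tx_0\|_{0,K}$; a similar test on \eqref{eq-xi0} bounds $\|\sqrt{k_0}\nabla\tx_0\|_{0,K}$ by $\eta_K\|\sqrt{k_0}\nabla(\Pi-I)l\|_{0,K}$, and since $(\Pi-I)l$ is the piece appearing in $\phi$ one relates $\|\sqrt{k_0}\nabla(\Pi-I)l\|_{0,K}$ to $|||\phi|||_K$ (using the contraction property of $\Pi$ noted after \eqref{def-pi} together with $k_0\le k$). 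Putting these together, $\|\sqrt{k_0}\nabla\tx_J\|_{0,K}\lesssim \eta_K^{J+1}\,|||\phi|||_K$ up to the constant $\tilde c$, whence $|||\phi-\phi_J|||_K = |||e_J|||_K \lesssim \eta_K^{J+1}|||\phi|||_K$. Summing over $K$, and recalling $V_h$ and $V_{J,h}$ are spanned by the respective basis functions with identical nodal values on $\partial K$, I would obtain that for any $v_h=\sum c_i\phi_i\in V_h$ the corresponding $v_{J,h}=\sum c_i(\phi_J)_i$ satisfies $|||v_h - v_{J,h}|||\lesssim (\eta^{J+1}+\tfrac{\eta^{J+1}}{1-\eta^{J+1}})^{1/2}|||v_h|||$; taking $v_h=u_h$ and using $|||u_h|||\le |||u|||$ (energy stability of the Galerkin projection) yields part (1). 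The extra $\tfrac{\eta^{J+1}}{1-\eta^{J+1}}$ term comes from bounding the non-orthogonal cross terms / the tail of the Neumann series when passing from the cellwise estimate to the global linear combination, and from controlling $|||\phi_J|||$ in terms of $|||\phi|||$.

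For part (2) I would use the triangle inequality $|||u - u_{J,h}||| \le |||u - u_h||| + |||u_h - u_{J,h}|||$, but since $u_{J,h}$ is the Galerkin solution in $V_{J,h}$ rather than an arbitrary interpolant, I would instead argue via quasi-optimality: $|||u - u_{J,h}||| \le \inf_{w\in V_{J,h}}|||u-w||| \le |||u - u_{J,h}^{\mathrm{int}}|||$ where $u_{J,h}^{\mathrm{int}}$ is the image in $V_{J,h}$ of $u_h$ under the nodal-value-preserving correspondence; then $|||u-u_{J,h}^{\mathrm{int}}|||\le |||u-u_h||| + |||u_h - u_{J,h}^{\mathrm{int}}|||$ and the last term is controlled exactly as in part (1) but now against $|||u_h|||\le |||u|||$, producing the factor $\sqrt2\tilde c(\eta^{J+1}+\tfrac{\eta^{J+1}}{1-\eta^{J+1}})$ and the $2|||u-u_h|||$ (one copy from the triangle inequality, one from re-expressing $|||u_h|||$ bounds through $|||u|||$ and $|||u-u_h|||$). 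The main obstacle I anticipate is the bookkeeping in the passage from the clean cellwise contraction $|||\phi-\phi_J|||_K\lesssim\eta_K^{J+1}|||\phi|||_K$ to the global statement about arbitrary linear combinations — in particular justifying that the constant degrades only by the stated $\bigl(\eta^{J+1}+\tfrac{\eta^{J+1}}{1-\eta^{J+1}}\bigr)^{1/2}$ factor rather than picking up a mesh-dependent or $d$-dependent blow-up, which requires carefully exploiting that both families share the linear/bilinear boundary data $l$ on each $\partial K$ so the perturbation is a pure interior bubble.
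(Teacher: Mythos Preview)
Your cellwise analysis is essentially what the paper does: the residual identity $-\nabla\cdot(k\nabla(\xi-\xi_J))=\nabla\cdot(k_1\nabla\tx_J)$ and the geometric decay $\|\sqrt{k_0}\nabla\tx_j\|_{0,K}\le\eta_K^{\,j}\|\sqrt{k_0}\nabla\tx_0\|_{0,K}$ are exactly the content of the paper's Lemma~\ref{lem-xi-j} and Lemma~\ref{singlethm}. One minor correction: you cannot write $\sqrt{k_1}$ or invoke $k_0\le k$, since $k_1=k-k_0$ may well be negative (Assumption~3.1 only gives $|k_1/k_0|<1$). The paper handles this by writing $k_1=\tfrac{k_1}{k_0}\,k_0$ throughout and using $|k_1/k_0|\le\eta_K$.

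The real gap is in the passage from local basis estimates to part~(1). You bound $|||u_h-v_{J,h}|||$ where $v_{J,h}=\sum c_i(\phi_J)_i$ shares the \emph{nodal coefficients} of $u_h=\sum c_i\phi_i$, and then assert this ``yields part~(1)''. But $u_{J,h}$ is the Galerkin projection of $u$ onto $V_{J,h}$, not the nodal image of $u_h$; its coefficients $\beta_{K,i}$ are different from the $c_i$. From $|||u_h-v_{J,h}|||\le\varepsilon$ one only gets $|||u_h-u_{J,h}|||\le 2|||u-u_h|||+\varepsilon$ via two triangle inequalities and C\'ea, which is not the statement of~(1). The paper avoids this by working with the elliptic projections $\Pi_h$, $\Pi_{J,h}$ and proving (Lemma~\ref{lem-pi-2}) the \emph{cross-projection} bound
\[
|||(I-\Pi_{J,h})\Pi_h u|||+|||(I-\Pi_h)\Pi_{J,h}u|||\le \tilde c\Bigl(\eta^{J+1}+\tfrac{\eta^{J+1}}{1-\eta^{J+1}}\Bigr)|||u|||,
\]
then invoking an abstract result (Theorem~3.6 of \cite{gmp10}) that converts such a bound into $|||\Pi_h u-\Pi_{J,h}u|||$. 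The two summands have distinct origins: $\eta^{J+1}$ comes from $(I-\Pi_{J,h})\Pi_h u$ (coefficients $\alpha_{K,i}$ of $\Pi_h u$ are fixed), while $\tfrac{\eta^{J+1}}{1-\eta^{J+1}}$ comes from $(I-\Pi_h)\Pi_{J,h}u$, where the unknown coefficients $\beta_{K,i}$ of $\Pi_{J,h}u$ force a self-referential estimate that is solved for, producing the $(1-\eta^{J+1})^{-1}$ factor. Your description of this term as coming from ``cross terms / tail of the Neumann series'' in the linear-combination bookkeeping is not the actual mechanism. Part~(2) then follows in the paper from the analogue of Theorem~3.8 in \cite{gmp10}; your quasi-optimality argument for~(2) is in the right spirit but, as you implicitly acknowledge, only works cleanly once part~(1) is in hand.
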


Thanks to the boundedness of $k_0$ and $k$, it follows that $\tilde{c}\leq \sqrt{\frac{2b_1}{a_0}}$.
By Theorem \ref{det-thm}, it follows that
\[
\lim_{J\rightarrow \infty}|||u_h-u_{J, h}|||=0, \quad  \lim_{h\rightarrow 0} \lim_{J\rightarrow \infty}|||u-u_{J, h}|||=0.
\]
This shows convergence of the proposed  MsFEM.

The  arguments of the proof Theorem \ref{det-thm} include the following  Lemma \ref{lem-pi-2},  Lemma \ref{lem-pi-3}  and Lemma \ref{lem-pi-4}.
In the rest of the section, we will formulate these lemmas.

To describe  and prove Lemma \ref{lem-pi-2}, we need a couple of lemmas.
The following lemma gives an upper bound of the sequence $\{\tx_j\}$.
\begin{lemma}
\label{lem-xi-j}
Let $\tx_0$ and $\tx_j$ ($j=1, 2,\cdots $) solve Eqs.~(\ref{eq-xi0}) and (\ref{eq-xi-j}), respectively.  Then
\[
\|\sqrt{k_0} \nabla \tx_j\|_{0,K} \leq 2 \eta_K^{j+1} \|\sqrt{k_0} \nabla l\|_{0,K},    \quad j=0,1,2, \cdots,
\]
where $\eta_K$ is defined in (\ref{assum1}).
\end{lemma}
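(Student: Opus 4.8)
The plan is to prove the bound on $\|\sqrt{k_0}\nabla\tx_j\|_{0,K}$ by induction on $j$, using the variational characterizations of $\tx_0$ and $\tx_j$ together with the Cauchy--Schwarz inequality and the pointwise bound $\eta_K = \|k_1/k_0\|_{L^\infty(K)}$.

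First I would dispose of the base case $j=0$. Testing equation (\ref{eq-xi0}) against $\tx_0 \in H_0^1(K)$ gives $(k_0\nabla\tx_0,\nabla\tx_0)_K = (k_1\nabla(\Pi-I)l,\nabla\tx_0)_K$. Bounding the right-hand side by Cauchy--Schwarz in the $k_0$-weighted inner product, after writing $k_1 = (k_1/k_0)\,k_0$, yields $\|\sqrt{k_0}\nabla\tx_0\|_{0,K} \le \eta_K\,\|\sqrt{k_0}\nabla(\Pi-I)l\|_{0,K}$. Then I use the triangle inequality $\|\sqrt{k_0}\nabla(\Pi-I)l\|_{0,K} \le \|\sqrt{k_0}\nabla\Pi l\|_{0,K} + \|\sqrt{k_0}\nabla l\|_{0,K}$, and the stability estimate for the projection $\Pi$ noted right after its definition (\ref{def-pi}), namely $\|\sqrt{k_0}\nabla\Pi l\|_{0,K}\le\|\sqrt{k_0}\nabla l\|_{0,K}$, to get the factor $2$; this gives $\|\sqrt{k_0}\nabla\tx_0\|_{0,K}\le 2\eta_K\|\sqrt{k_0}\nabla l\|_{0,K}$, which is the claimed bound for $j=0$.

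For the inductive step, suppose the bound holds for $j-1$. Testing (\ref{eq-xi-j}) against $\tx_j\in H_0^1(K)$ gives $(k_0\nabla\tx_j,\nabla\tx_j)_K = -(k_1\nabla\tx_{j-1},\nabla\tx_j)_K$; by the same Cauchy--Schwarz argument I obtain $\|\sqrt{k_0}\nabla\tx_j\|_{0,K}\le\eta_K\|\sqrt{k_0}\nabla\tx_{j-1}\|_{0,K}$. Chaining with the inductive hypothesis $\|\sqrt{k_0}\nabla\tx_{j-1}\|_{0,K}\le 2\eta_K^{j}\|\sqrt{k_0}\nabla l\|_{0,K}$ produces $\|\sqrt{k_0}\nabla\tx_j\|_{0,K}\le 2\eta_K^{j+1}\|\sqrt{k_0}\nabla l\|_{0,K}$, closing the induction.

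The only subtlety — and the closest thing to an obstacle — is the proper handling of the $k_1$-weighted term: one must factor $k_1 = (k_1/k_0)k_0$ so that the $L^\infty$ norm $\eta_K$ comes out and the remaining bilinear form is the $k_0$-weighted one to which Cauchy--Schwarz applies cleanly; a naive split using $\sqrt{k_1}$ would force comparison constants between $k_0$ and $k_1$ rather than the sharp ratio $\eta_K$. Everything else is routine. I would present the base case and the inductive step as the two displayed chains of inequalities above, noting the use of the $\Pi$-stability estimate as the source of the constant $2$.
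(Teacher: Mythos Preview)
Your proposal is correct and matches the paper's own proof essentially line for line: the paper also tests (\ref{eq-xi0}) and (\ref{eq-xi-j}) against $\tx_0$ and $\tx_j$, factors $k_1=(k_1/k_0)k_0$ to extract $\eta_K$ via Cauchy--Schwarz, uses the $\Pi$-stability bound to obtain the constant $2$, and then iterates the one-step contraction $\|\sqrt{k_0}\nabla\tx_j\|_{0,K}\le\eta_K\|\sqrt{k_0}\nabla\tx_{j-1}\|_{0,K}$.
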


The proof of Lemma \ref{lem-xi-j} is presented in Appendix \ref{app1}.

The following lemma shows that the sequence of the new MsFE basis functions converge to a standard  MsFE basis function.
\begin{lemma}
\label{singlethm}
The sequence of basis functions  $\{\phi_J\}$ is convergent. Moreover,
\[
\lim_{J\rightarrow \infty}|||\phi_J-\phi|||_{K} =0,
\]
where $\phi$ solves the standard MsFE basis equation (\ref{MsFE-basis1}).
\end{lemma}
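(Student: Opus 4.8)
The plan is to show that the sequence $\{\phi_J\}$ is Cauchy in the energy norm $|||\cdot|||_K$ and that its limit solves the standard MsFEM equation~(\ref{MsFE-basis1}). Since $\phi_J = (I-\Pi)l + \xi_J$ with $\xi_J = \sum_{j=0}^J \tx_j$, and the term $(I-\Pi)l$ is independent of $J$, it suffices to study the series $\sum_{j=0}^\infty \tx_j$. First I would invoke Lemma~\ref{lem-xi-j}, which gives $\|\sqrt{k_0}\,\nabla \tx_j\|_{0,K} \le 2\eta_K^{j+1}\|\sqrt{k_0}\,\nabla l\|_{0,K}$. Since $\eta_K < 1$ under Assumption~3.1, the tail $\sum_{j>J}\|\sqrt{k_0}\,\nabla\tx_j\|_{0,K}$ is bounded by a convergent geometric series, so $\{\xi_J\}$ is Cauchy in the $\|\sqrt{k_0}\,\nabla\cdot\|_{0,K}$-norm; by the equivalence of this norm with $|||\cdot|||_K$ (both $k$ and $k_0$ are bounded above and below), $\{\xi_J\}$, hence $\{\phi_J\}$, is Cauchy in $|||\cdot|||_K$ and converges to some limit $\xi_\infty \in H^1_0(K)$, giving a limiting function $\phi_\infty := (I-\Pi)l + \xi_\infty$.

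Next I would identify $\phi_\infty$ with $\phi$. The recursive definitions~(\ref{eq-xi0}) and (\ref{eq-xi-j}) can be written variationally: for all $w \in H^1_0(K)$,
\begin{equation*}
(k_0\nabla\tx_0,\nabla w) = (k_1\nabla(\Pi-I)l,\nabla w), \qquad (k_0\nabla\tx_j,\nabla w) = -(k_1\nabla\tx_{j-1},\nabla w) \ \ (j\ge 1).
\end{equation*}
Summing from $j=0$ to $J$ and telescoping gives, for all $w\in H^1_0(K)$,
\begin{equation*}
(k_0\nabla\xi_J,\nabla w) = (k_1\nabla(\Pi-I)l,\nabla w) - (k_1\nabla\xi_{J-1},\nabla w),
\end{equation*}
i.e. $((k_0+k_1)\nabla\xi_J,\nabla w) = (k_1\nabla(\Pi-I)l,\nabla w) + (k_1\nabla(\xi_J-\xi_{J-1}),\nabla w)$. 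Letting $J\to\infty$, the term $\xi_J - \xi_{J-1} = \tx_J \to 0$ in the energy norm by Lemma~\ref{lem-xi-j}, and by continuity of the bilinear form we obtain $(k\nabla\xi_\infty,\nabla w) = (k_1\nabla(\Pi-I)l,\nabla w)$ for all $w\in H^1_0(K)$ — which is exactly the weak form of~(\ref{eq-xi}). Since~(\ref{eq-xi}) has a unique solution $\xi$, we conclude $\xi_\infty = \xi$, and therefore $\phi_\infty = (I-\Pi)l + \xi = \phi$ by construction of $\phi$ from~(\ref{MsFE-basis1}) via the substitution $\phi = (I-\Pi)l + \xi$.

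The main obstacle I anticipate is the bookkeeping in the telescoping step: one must carefully track the sign alternation in~(\ref{eq-xi-j}) and confirm that the partial sums of the variational identities collapse to the single equation for $\xi$ with only the "boundary term" $\tx_J$ surviving, which then vanishes in the limit. A secondary point requiring care is making the norm-equivalence constants explicit enough that the geometric decay in Lemma~\ref{lem-xi-j}, stated in the $k_0$-weighted norm, transfers cleanly to $|||\cdot|||_K$; this uses only $0 < b_0 \le k_0 \le b_1$ and $0 < a_0 \le k \le a_1$, so it is routine but should be stated. Everything else — Cauchyness from a geometric series, uniqueness for~(\ref{eq-xi}) via Lax--Milgram, and continuity of the bilinear form — is standard.
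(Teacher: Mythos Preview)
Your argument is correct and shares its core with the paper's proof: both reduce to $\xi_J\to\xi$, both sum the recursive equations~(\ref{eq-xi0})--(\ref{eq-xi-j}) to obtain the same identity
\[
(k\nabla\xi_J,\nabla w)=(k_1\nabla(\Pi-I)l,\nabla w)+(k_1\nabla\tx_J,\nabla w)\quad\forall w\in H^1_0(K),
\]
and both rely on Lemma~\ref{lem-xi-j} to kill the $\tx_J$ term. The difference is only in how you pass to the limit. The paper subtracts this identity from the weak form of~(\ref{eq-xi}) to obtain an equation for $\xi-\xi_J$ directly, then estimates
\[
|||\xi-\xi_J|||_K\le \Big\|\tfrac{k_1}{\sqrt{kk_0}}\Big\|_{L^\infty(K)}\|\sqrt{k_0}\,\nabla\tx_J\|_{0,K}\le 2\Big\|\tfrac{k_1}{\sqrt{kk_0}}\Big\|_{L^\infty(K)}\eta_K^{J+1}\|\sqrt{k_0}\,\nabla l\|_{0,K},
\]
which is a one-shot quantitative bound (and is exactly what feeds Proposition~\ref{prop3.2}). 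You instead first prove Cauchyness from the geometric tail, then identify the limit via Lax--Milgram uniqueness. Your route is perfectly valid and arguably cleaner conceptually, but it is slightly less direct: to recover the explicit rate needed for Proposition~\ref{prop3.2} you would still have to revisit the summed identity and estimate $|||\xi-\xi_J|||_K$ rather than $\sum_{j>J}\|\sqrt{k_0}\,\nabla\tx_j\|_{0,K}$, since the latter carries an extra factor $(1-\eta_K)^{-1}$. The paper's subtraction trick avoids that detour.
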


The proof of Lemma \ref{singlethm} is presented in Appendix \ref{app2}.

From the proof of Lemma \ref{singlethm}, we can obtain an explicit convergence rate for $\phi_J$, which is stated as following:
\begin{proposition}
\label{prop3.2}
If the coarse function  $l$ in Eq.~(\ref{eq-xi0}) is in $H^1(K)$, then
\[
|||\phi-\phi_J|||_K  \leq C_l\frac{2 b_1}{\sqrt{a_0}} \eta_K^{J+2},
\]
where the constant $C_l=\|\nabla l\|_{0,K}$.
\end{proposition}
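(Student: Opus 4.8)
The plan is to track the convergence rate explicitly by revisiting the argument behind Lemma \ref{singlethm}. Recall that $\phi_J = (I-\Pi)l + \xi_J$ with $\xi_J = \sum_{j=0}^J \tx_j$, while the limiting standard basis function satisfies $\phi = (I-\Pi)l + \xi$ where $\xi$ solves Eq.~(\ref{eq-xi}). Hence $\phi - \phi_J = \xi - \xi_J = \sum_{j=J+1}^\infty \tx_j$, and the whole estimate reduces to bounding the tail of the bubble series in the energy norm $|||\cdot|||_K$. First I would use $|||v|||_K^2 = (k\nabla v,\nabla v)_K \le a_1 \|\nabla v\|_{0,K}^2$ and then relate $\|\nabla v\|_{0,K}$ to $\|\sqrt{k_0}\nabla v\|_{0,K}$ via the lower bound $k_0 \ge b_0$; more simply, $|||v|||_K \le \sqrt{b_1/b_0}\,\|\sqrt{k_0}\nabla v\|_{0,K}$ up to constants — though to match the stated constant $2b_1/\sqrt{a_0}$ I would instead bound $|||v|||_K = \|\sqrt k \nabla v\|_{0,K}$ using $k = k_0+k_1 \le 2k_0$ (valid under Assumption 3.1, since $k_1 < k_0$), giving $|||v|||_K \le \sqrt{2}\,\|\sqrt{k_0}\nabla v\|_{0,K}$.

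Next I would invoke Lemma \ref{lem-xi-j}, which gives $\|\sqrt{k_0}\nabla \tx_j\|_{0,K} \le 2\eta_K^{j+1}\|\sqrt{k_0}\nabla l\|_{0,K}$. Summing the geometric tail,
\[
\Big\|\sqrt{k_0}\nabla \sum_{j=J+1}^\infty \tx_j\Big\|_{0,K} \le \sum_{j=J+1}^\infty 2\eta_K^{j+1}\|\sqrt{k_0}\nabla l\|_{0,K} = \frac{2\eta_K^{J+2}}{1-\eta_K}\|\sqrt{k_0}\nabla l\|_{0,K}.
\]
Then using $k_0 \le b_1$ we get $\|\sqrt{k_0}\nabla l\|_{0,K} \le \sqrt{b_1}\,\|\nabla l\|_{0,K} = \sqrt{b_1}\,C_l$. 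Combining with the $|||\cdot|||_K \le \sqrt 2 \|\sqrt{k_0}\nabla \cdot\|_{0,K}$ step yields a bound of the form $C \cdot \frac{b_1}{\cdots}\,C_l\,\eta_K^{J+2}$; the precise juggling of $b_1$, $a_0$, and the factor $1-\eta_K$ versus the claimed clean constant $2b_1/\sqrt{a_0}$ is the only delicate bookkeeping, and I would reconcile it by noting $\sqrt{2b_1/b_0}\cdot\sqrt{b_1} = \sqrt 2 b_1/\sqrt{b_0}$ and absorbing $1/(1-\eta_K)$ or adjusting which lower bound ($a_0$ vs.\ $b_0$) is used, possibly at the cost of the displayed constant being an upper estimate rather than sharp.

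The main obstacle is therefore not conceptual but cosmetic: making the constants in the final inequality come out exactly as stated. The structural content — $\phi-\phi_J$ is the geometric tail of $\{\tx_j\}$, Lemma \ref{lem-xi-j} controls each term, and the geometric sum produces the $\eta_K^{J+2}$ rate — is essentially immediate from the already-established lemmas. If the stated constant $2b_1/\sqrt{a_0}$ does not fall out cleanly, I would either present the estimate with the factor $1/(1-\eta_K)$ retained, or replace it by a harmless larger constant; in any case the exponent $\eta_K^{J+2}$ and the linear dependence on $C_l = \|\nabla l\|_{0,K}$ are the substantive conclusions and follow directly.
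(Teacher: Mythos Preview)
Your approach is correct in substance---you obtain the rate $\eta_K^{J+2}$ and the linear dependence on $C_l$---but you arrive there by a different route than the paper, and this is precisely why the constant does not fall out cleanly for you.

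You write $\xi-\xi_J=\sum_{j\ge J+1}\tx_j$ and bound the tail term by term via the triangle inequality and Lemma~\ref{lem-xi-j}, which inevitably produces the geometric factor $1/(1-\eta_K)$. The paper instead exploits a fact established inside the proof of Lemma~\ref{singlethm}: summing the defining equations for $\tx_0,\dots,\tx_J$ telescopes, so that $\xi-\xi_J$ itself satisfies the boundary value problem
\[
-\nabla\cdot\big(k\,\nabla(\xi-\xi_J)\big)=\nabla\cdot(k_1\nabla\tx_J)\quad\text{in }K,\qquad \xi-\xi_J=0\ \text{on }\partial K,
\]
whose right-hand side involves only the single term $\tx_J$. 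One energy estimate (test against $\xi-\xi_J$, Cauchy--Schwarz) then gives
\[
|||\xi-\xi_J|||_K\le \Big\|\tfrac{k_1}{\sqrt{kk_0}}\Big\|_{L^\infty(K)}\|\sqrt{k_0}\,\nabla\tx_J\|_{0,K}
\le 2\Big\|\tfrac{k_1}{k_0}\Big\|_{L^\infty(K)}\Big\|\sqrt{\tfrac{k_0}{k}}\Big\|_{L^\infty(K)}\eta_K^{J+1}\|\sqrt{k_0}\|_{L^\infty(K)}\|\nabla l\|_{0,K},
\]
and now $\eta_K\cdot\sqrt{b_1/a_0}\cdot\sqrt{b_1}=\eta_K\,b_1/\sqrt{a_0}$ delivers exactly $C_l\,\tfrac{2b_1}{\sqrt{a_0}}\eta_K^{J+2}$ with no residual $1/(1-\eta_K)$. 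So the ``cosmetic'' obstacle you flag is genuinely resolved by using the PDE for the difference rather than summing the tail; your version yields a slightly weaker (but still correct) bound.
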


The proof of Proposition \ref{prop3.2} is presented in Appendix \ref{app3}.

To describe Lemma \ref{lem-pi-2}, we need to introduce elliptic projection operators. Specifically, we define the elliptic projection $\Pi_h: H_0^1(D)\longrightarrow V_h $ by
\[
(k\nabla \Pi_h v, \nabla w):=(k\nabla v, \nabla w) \quad \text{$\forall v\in H_0^1(D)$  and $\forall w\in V_h$}.
\]
  Similarly,  the
elliptic projection $\Pi_{J,h}: H_0^1(D) \longrightarrow V_{J,h} $ is defined  by
\[
(k\nabla \Pi_{J,h} v, \nabla w):=(k\nabla v, \nabla w) \quad \text{$\forall v\in H_0^1(D)$  and $\forall w\in V_{J,h}$}.
\]
Since $\lim_{J\rightarrow \infty} V_{J,h}=V_h$ by Lemma \ref{singlethm},  the standard density argument \cite{bs94} implies that  $\lim_{J\rightarrow \infty} \Pi_{J,h}=\Pi_h$.
We can show that the projection operators $\Pi_h$ and $\Pi_{J, h}$ are self-adjoint and  idempotent operators.
Let $u_h$ and $u_{J,h}$ be the MsFEM solution in $V_h$  and $V_{J,h}$, respectively,  for Eq.  (\ref{model-eq}). Then
\[
u_h=\Pi_h u \quad \text{and}  \quad u_{J,h}=\Pi_{J,h} u.
\]

Using Lemma \ref{lem-xi-j}  and the technique presented in \cite{gmp10}, we have the following result.
\begin{lemma}
\label{lem-pi-2}
Let $u\in H_0^1(D)$. Then
\begin{equation}
\label{est-pi2}
|||(I-\Pi_{J, h})\Pi_h u|||+|||(I-\Pi_h)\Pi_{J, h}u|||\leq \tilde{c} \big(\eta^{J+1}+\frac{\eta^{J+1}}{1-\eta^{J+1}}\big ) |||u|||,
\end{equation}
where $\tilde{c}$ and $\eta$ are defined in Theorem \ref{det-thm}.
\end{lemma}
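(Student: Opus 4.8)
The plan is to estimate each of the two terms in \eqref{est-pi2} separately, exploiting the fact that both $\Pi_h$ and $\Pi_{J,h}$ are self-adjoint idempotent operators with respect to the energy inner product $(k\nabla\cdot,\nabla\cdot)$, and that the difference between the two spaces is controlled locally by Lemma~\ref{lem-xi-j}. First I would observe that on a single coarse cell $K$, the new basis function $\phi_J$ differs from the standard one $\phi$ only through the tail of the bubble series, so $|||\phi-\phi_J|||_K$ is governed by $\|\sqrt{k_0}\nabla(\xi-\xi_J)\|_{0,K}=\|\sqrt{k_0}\nabla\sum_{j>J}\tx_j\|_{0,K}$; summing the geometric bound from Lemma~\ref{lem-xi-j} gives a per-cell estimate of the form $C\,\eta_K^{J+1}(1-\eta_K)^{-1}$ times a local energy norm, and converting $\sqrt{k_0}$-norms to $\sqrt{k}$-norms introduces the factor $\|\sqrt{k_0/k}\|_{L^\infty(K)}$ hidden inside $\tilde c$. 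This is the ``approximation property'' of $V_{J,h}$ relative to $V_h$.

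Next I would handle the first term $|||(I-\Pi_{J,h})\Pi_h u|||$. Write $v_h:=\Pi_h u\in V_h$ and note that $(I-\Pi_{J,h})v_h$ measures how well $v_h$ is approximated in $V_{J,h}$; by Cea's lemma for the projection $\Pi_{J,h}$, $|||(I-\Pi_{J,h})v_h|||\le |||v_h - w_h|||$ for any $w_h\in V_{J,h}$. Choosing $w_h$ to be the function built from the truncated-series basis functions $(\phi_J)_{K,i}$ with the same nodal values as $v_h$, the difference $v_h-w_h$ is supported cell-by-cell and equals $\sum_i c_i(\phi_{K,i}-(\phi_J)_{K,i})$ on each $K$, whose energy norm is bounded via the per-cell estimate above and the stability of nodal interpolation (here the technique from \cite{gmp10} enters, allowing one to pass from the local bubble estimates to a global bound in terms of $|||v_h|||\le|||u|||$). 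This produces the $\tilde c\,\eta^{J+1}$ contribution.

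For the second term $|||(I-\Pi_h)\Pi_{J,h}u|||$ I would argue symmetrically: set $w_h:=\Pi_{J,h}u\in V_{J,h}$ and approximate it in $V_h$. The subtlety is that elements of $V_{J,h}$ are not nodally interpolated into $V_h$ quite as cleanly, so one instead writes $w_h=\sum_i c_i(\phi_J)_{K,i}$ and compares with $\sum_i c_i\phi_{K,i}\in V_h$, again using $|||\phi_{K,i}-(\phi_J)_{K,i}|||_K$ together with a bound $\sum_i c_i^2\lesssim$ (local energy of $w_h$) — this is where the $\eta^{J+1}/(1-\eta^{J+1})$ factor naturally appears, since the coefficient-to-energy equivalence for the perturbed basis degrades by exactly that amount. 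Summing over $K$ and using $|||w_h|||=|||\Pi_{J,h}u|||\le|||u|||$ closes this term.

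The main obstacle I anticipate is the passage from the purely local per-cell bubble estimates of Lemma~\ref{lem-xi-j} to a clean global energy bound with the stated constant structure $\eta^{J+1}+\eta^{J+1}/(1-\eta^{J+1})$: one must control the coefficient vectors $(c_i)$ of a MsFEM function in terms of its energy norm uniformly over the mesh, which requires the quasi-uniformity of $\mathfrak{T}_h$ and a careful inverse-type inequality relating nodal values on $\partial K$ to $|||\cdot|||_K$, and one must verify that the perturbation $\phi_{K,i}\mapsto(\phi_J)_{K,i}$ does not destroy the nonsingularity of the local basis (which is exactly the content of the Remark's spectral-radius condition, guaranteed here by Assumption~3.1). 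Once the technique of \cite{gmp10} is invoked to supply this coefficient control, the rest is geometric-series bookkeeping.
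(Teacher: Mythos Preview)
Your overall architecture is right: Cea's lemma for each projection, then a local comparison of the two bases on each $K$. The genuine gap is in how you propose to control the linear combination $\sum_i c_i\bigl(\phi_{K,i}-(\phi_J)_{K,i}\bigr)$. You plan to bound the individual differences $|||\phi_{K,i}-(\phi_J)_{K,i}|||_K$ using the tail of Lemma~\ref{lem-xi-j} and then control the coefficients $c_i$ by an inverse-type inequality relating nodal values to $|||\cdot|||_K$. That route, even if it can be made to work, inevitably drags in mesh-dependent constants from the inverse inequality and from the (possibly ill-conditioned) change of basis between nodal values and energy norms of multiscale functions; it will not deliver the clean, mesh-independent constant $\tilde c=\sqrt{2}\max_K\|\sqrt{k_0/k}\|_{L^\infty(K)}$ stated in the lemma. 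Your anticipated ``main obstacle'' is in fact the place where your plan diverges from what is needed.

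The paper avoids coefficient control entirely by a structural identity. Define $T:H_0^1(K)\to H_0^1(K)$ by $-\nabla\cdot(k_0\nabla Tv)=\nabla\cdot(k_1\nabla v)$; then $\phi_{K,i}-(\phi_J)_{K,i}=T^{J+1}\bigl(\phi_{K,i}-(I-\Pi)l_{K,i}\bigr)$, so with $v_h=\Pi_h u=\sum_i\alpha_{K,i}\phi_{K,i}$ one gets $\sum_i\alpha_{K,i}\bigl(\phi_{K,i}-(\phi_J)_{K,i}\bigr)=T^{J+1}z_K$ for the single bubble $z_K:=\Pi_h u|_K-\sum_i\alpha_{K,i}(I-\Pi)l_{K,i}\in H_0^1(K)$. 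The contraction $\|\sqrt{k_0}\nabla T^{J+1}z_K\|_{0,K}\le\eta_K^{J+1}\|\sqrt{k_0}\nabla z_K\|_{0,K}$ follows exactly as in Lemma~\ref{lem-xi-j}, and the crucial point is that $\|\sqrt{k_0}\nabla z_K\|_{0,K}$ is bounded directly by $|||\Pi_h u|||_K$ (up to $\|\sqrt{k_0/k}\|_{L^\infty(K)}$) using only the $k_0$-orthogonality $(k_0\nabla(I-\Pi)l_{K,i},\nabla z_K)=0$ built into the definition of $\Pi$. No inverse inequality, no individual coefficient bound, no quasi-uniformity is invoked. For the second term the same identity is used, but now a copy of $\|\sqrt{k_0}\nabla(I-\Pi_h)\Pi_{J,h}u\|_{0,K}$ reappears on the right-hand side after applying $T^{J+1}$, and absorbing it is what produces $\eta^{J+1}/(1-\eta^{J+1})$ rather than the $\eta^{J+1}/(1-\eta)$ you would get from summing a geometric tail. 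So the specific form of the constant in \eqref{est-pi2} is a fingerprint of this operator-identity argument, not of a coefficient-stability argument.
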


\begin{proof}
For any $u\in H_0^1(D)$, we write
\[
\Pi_h u=\sum_K \sum_i \alpha_{K,i}\phi_{K,i}, \quad \Pi_{J,h} u=\sum_K\sum_i \beta_{K,i}(\phi_J)_{K,i}.
\]
We define the operator $T: H_0^1(K)\longrightarrow H_0^1(K)$ by
\[
-\nabla \cdot (k_0 \nabla Tv)=\nabla \cdot (k_1\nabla v), \quad \forall v\in H_0^1(K).
\]
Then we have
\begin{eqnarray}
\label{base-dif}
\begin{split}
\phi_{K,i}-(\phi_J)_{K,i}&=\sum_{j=J+1}^{\infty} (\tx_j)_{K,i}=\sum_{j=J+1}^{\infty} T^j (\tx_0)_{K,i}\\
&=T^{J+1} \sum_{j=0}^{\infty} T^j (\tx_0)_{K,i}=T^{J+1} (\phi_{K,i}-(I-\Pi)l_{K,i}).
\end{split}
\end{eqnarray}
By (\ref{base-dif}), we have
\begin{equation}
\label{eq1-lem3.3}
(I-\Pi_{J,h})\Pi_h u |_K=\sum_i \alpha_{K,i} \big( \phi_{K,i}-(\phi_J)_{K,i}\big) =T^{J+1} \big(\Pi_h u|_K-\sum_i \alpha_{K,i}(I-\Pi)l_{K,i}\big).
\end{equation}
and
\begin{equation}
\label{eq2-lem3.3}
(I-\Pi_h) \Pi_{J,h}u |_K =\sum_i \beta_{K,i}\big( (\phi_J)_{K,i}-\phi_{K,i} \big)=T^{J+1} \big(\sum_i \beta_{K,i} (I-\Pi)l_{K,i}-\Pi_h \Pi_{J,h} u|_K\big).
\end{equation}
We first estimate $(I-\Pi_{J,h})\Pi_h u$.  Let $z_K=\Pi_h u|_K-\sum_i \alpha_{K,i}(I-\Pi)l_{K,i}$.
Thanks to (\ref{eq1-lem3.3}) and the proof of Lemma \ref{lem-xi-j}, it follows that
\begin{eqnarray*}
\begin{split}
|||(I-\Pi_{J,h})\Pi_h u|||_K^2 &= \big( k\nabla (\Pi_h u-\Pi_{J,h}\Pi_h u), \nabla (\Pi_h u-\Pi_{J,h}\Pi_h u)\big)_K\\
&= \big (k \nabla (T^{J+1} z_K), \nabla (\Pi_h u-\Pi_{J,h}\Pi_h u)\big)_K\\
& \leq \|\sqrt{k} \nabla (T^{J+1} z_K)\|_{0, K} |||(I-\Pi_{J,h})\Pi_h u|||_K\\
&\leq \|\sqrt{k\over k_0}\|_{L^{\infty}(K)}\|\sqrt{k_0} \nabla (T^{J+1} z_K)\|_{0, K}|||(I-\Pi_{J,h})\Pi_h u|||_K\\
&\leq \sqrt{2} \eta_K ^{J+1} \| \sqrt{k_0}\nabla  z_K\|_{0, K}|||(I-\Pi_{J,h})\Pi_h u|||_K.
\end{split}
\end{eqnarray*}
This implies that
\begin{equation}
\label{eq3-lem3.3}
|||(I-\Pi_{J,h})\Pi_h u|||_K \leq \sqrt{2} \eta_K ^{J+1} \| \sqrt{k_0}\nabla  z_K\|_{0, K}.
\end{equation}
Since $z_K\in H_0^1(K)$ and $\big(k_0 \nabla (I-\Pi) l_{K,i}, z_K)=0$, we have
\begin{eqnarray*}
\begin{split}
&\|\sqrt{k_0}\nabla  z_K\|_{0, K}^2=\big (k_0 \Pi_h u|_K-k_0\sum_i \alpha_{K,i}(I-\Pi)l_{K,i}, \nabla z_K\big) \\
&=\big (k_0 \Pi_h u|_K, \nabla z_K\big)\leq \|\sqrt{k_0\over k}\|_{L^{\infty}(K)} |||\Pi_h u|||_K \|\sqrt{k_0}\nabla  z_K\|_{0, K},
\end{split}
\end{eqnarray*}
from which we get
\begin{equation}
\label{zk-bound}
\|\sqrt{k_0}\nabla  z_K\|_{0, K}\leq \|\sqrt{k_0\over k}\|_{L^{\infty}(K)} |||\Pi_h u|||_K.
\end{equation}
By (\ref{eq3-lem3.3}) and (\ref{zk-bound}), it follows immediately that
\begin{eqnarray}
\label{eq4-lem3.3}
\begin{split}
|||(I-\Pi_{J,h})\Pi_h u||| &\leq \sqrt{2} \max_K \left( \|\sqrt{k_0\over k}\|_{L^{\infty}(K)} \eta_K^{J+1} \right) |||\Pi_h u|||\\
&\leq  \tilde{c} \eta^{J+1} |||u|||.
\end{split}
\end{eqnarray}

Next we estimate $(I-\Pi_h) \Pi_{J,h}u$.
By (\ref{eq2-lem3.3}), we have
\begin{eqnarray}
\label{eq10-lem3.3}
\begin{split}
&\|\sqrt{k_0}\nabla (I-\Pi_h) \Pi_{J,h}u\|_{0,K} =\|\sqrt{k_0}\nabla T^{J+1} \big(\sum_i \beta_{K,i} (I-\Pi)l_{K,i}-\Pi_h \Pi_{J,h} u|_K\big)\|_{0,K}\\
&\leq \eta_K^{J+1} \left[ \|\sqrt{k_0} \nabla \big(\Pi_{J,h} u- \sum_i \beta_{K,i} (I-\Pi)l_{K,i}\big)\|_{0,K} + \|\sqrt{k_0} \nabla (\Pi_{J,h} u- \Pi_h \Pi_{J,h} u )\|_{0, K} \right].
\end{split}
\end{eqnarray}
Let $\tilde{z}_K=\Pi_{J, h} u|_K -\sum_i\beta_{K,i} (I-\Pi)l_{K,i}$. Then  $\tilde{z}_K\in H_0^1(K)$ and
\begin{equation}
\label{eq11-lem3.3}
(k_0 \nabla \tilde{z}_K, \nabla \tilde{z}_K) = (k_0 \nabla \Pi_{J,h} u, \nabla \tilde{z}_K)\leq \|\sqrt{k_0} \nabla \Pi_{J,h} u \|_{0,K} \|\sqrt{k_0}\nabla \tilde{z}_K\|_{0, K}.
\end{equation}
Combining (\ref{eq10-lem3.3}) and (\ref{eq11-lem3.3}) implies that
\[
\|\sqrt{k_0}\nabla (I-\Pi_h) \Pi_{J,h}u\|_{0,K} \leq \frac{\eta_K^{J+1}}{1-\eta_K^{J+1}} \|\sqrt{k_0} \nabla \Pi_{J,h} u \|_{0,K}.
\]
Because
$(k_0 \nabla \Pi_{J,h} u, \nabla \Pi_{J,h} u)\leq  \| {k_0\over k}\|_{L^{\infty}(K)} |||\Pi_{J,h} u|||_K^2$, it follows
\begin{equation}
\label{eq12-lem3.3}
\|\sqrt{k_0}\nabla (I-\Pi_h) \Pi_{J,h}u\|_{0,K} \leq \frac{\eta_K^{J+1}}{1-\eta_K^{J+1}} \| {k_0\over k}\|_{L^{\infty}(K)} |||\Pi_{J,h} u|||_K.
\end{equation}
Since $\|{k_1\over k_0}\|_{L^{\infty}(K)}<1$, we get $\| {k_0\over k}\|_{L^{\infty}(K)}>{1\over 2}$.
A direct calculation gives
\begin{equation}
\label{eq13-lem3.3}
\|\sqrt{k_0}\nabla (I-\Pi_h) \Pi_{J,h}u\|_{0,K}^2 > {1\over 2} |||(I-\Pi_h) \Pi_{J,h}u|||_K^2.
\end{equation}
By (\ref{eq12-lem3.3}) and (\ref{eq13-lem3.3}), we have
\[
|||(I-\Pi_h) \Pi_{J,h}u|||_K\leq \sqrt{2} \| {k_0\over k}\|_{L^{\infty}(K)} \frac{\eta_K^{J+1}}{1-\eta_K^{J+1}}|||\Pi_{J,h} u|||_K.
\]
Since $|||\Pi_{J,h} u|||_K\leq |||u|||_K$, it follows immediately that
\begin{equation}
\label{eq14-lem3.3}
|||(I-\Pi_h) \Pi_{J,h}u|||\leq \tilde{c} \frac{\eta^{J+1}}{1-\eta^{J+1}}||| u|||_K.
\end{equation}
Combining (\ref{eq4-lem3.3}) and (\ref{eq14-lem3.3}) completes the proof.
\end{proof}

A straightforward application of Theorem 3.6 in \cite{gmp10} gives rise to the following lemma.
\begin{lemma}
\label{lem-pi-3}
Let the inequality (\ref{est-pi2})  in Lemma \ref{lem-pi-2} hold. Then 
\[
|||u_h-u_{J, h}|||\leq (\tilde{c})^{1/2} \big(\eta^{J+1}+\frac{\eta^{J+1}}{1-\eta^{J+1}}\big )^{1/2} |||u|||,
\]
where $\tilde{c}$ is defined in Lemma  \ref{lem-pi-2}. 
\end{lemma}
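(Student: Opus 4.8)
The plan is to reproduce the abstract comparison of two energy (Galerkin) projections underlying Theorem~3.6 of \cite{gmp10}. Throughout, write $a(v,w):=(k\nabla v,\nabla w)$, so that $|||\cdot|||$ is the norm induced by $a(\cdot,\cdot)$, and recall from the discussion preceding Lemma~\ref{lem-pi-2} that $\Pi_h$ and $\Pi_{J,h}$ are idempotent and $a$-self-adjoint operators on $H_0^1(D)$, with $u_h=\Pi_h u$ and $u_{J,h}=\Pi_{J,h}u$.

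The first step is algebraic. I would start from
\[
|||u_h-u_{J,h}|||^2=a(u_h-u_{J,h},\Pi_h u)-a(u_h-u_{J,h},\Pi_{J,h}u)
\]
and rewrite each term so that the ``crossed'' operators $(I-\Pi_{J,h})\Pi_h$ and $(I-\Pi_h)\Pi_{J,h}$ act on $u$ in exactly the order used in (\ref{est-pi2}). For the first term, since $\Pi_h u\in V_h$ the Galerkin identity $a(\Pi_h w,v)=a(w,v)$ for $v\in V_h$ gives $a(u_h-u_{J,h},\Pi_h u)=a\big(\Pi_h(u_h-u_{J,h}),\Pi_h u\big)$; idempotency of $\Pi_h$ turns $\Pi_h(u_h-u_{J,h})$ into $\Pi_h(I-\Pi_{J,h})u$, applying that Galerkin identity once more removes the outer $\Pi_h$, and the $a$-self-adjointness of $\Pi_{J,h}$ then produces $a\big(u,(I-\Pi_{J,h})\Pi_h u\big)$. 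A symmetric computation (interchanging the roles of $\Pi_h$ and $\Pi_{J,h}$, and using $\Pi_{J,h}(u_h-u_{J,h})=-\Pi_{J,h}(I-\Pi_h)u$) turns the second term into $a\big(u,(I-\Pi_h)\Pi_{J,h}u\big)$. Cauchy--Schwarz then yields
\[
|||u_h-u_{J,h}|||^2\leq\Big(|||(I-\Pi_{J,h})\Pi_h u|||+|||(I-\Pi_h)\Pi_{J,h}u|||\Big)\,|||u|||.
\]

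The second step is to apply the hypothesis: by (\ref{est-pi2}) the bracketed sum is at most $\tilde{c}\big(\eta^{J+1}+\eta^{J+1}/(1-\eta^{J+1})\big)|||u|||$, hence
\[
|||u_h-u_{J,h}|||^2\leq\tilde{c}\Big(\eta^{J+1}+\frac{\eta^{J+1}}{1-\eta^{J+1}}\Big)|||u|||^2,
\]
and taking square roots gives precisely the stated bound. The square root in the conclusion is merely an artifact of controlling the square $|||u_h-u_{J,h}|||^2$ by the product of $|||u|||$ with the quantity estimated in Lemma~\ref{lem-pi-2}.

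I expect the only genuine content to be the bookkeeping in the first step: one must arrange the uses of idempotency and $a$-self-adjointness so that the operators appear in the form $(I-\Pi_{J,h})\Pi_h u$ and $(I-\Pi_h)\Pi_{J,h}u$ rather than, say, $\Pi_h(I-\Pi_{J,h})u$, which is what matches (\ref{est-pi2}) and Lemma~\ref{lem-pi-2}. An equivalent route is to first establish the identity $|||u_h-u_{J,h}|||^2=|||(I-\Pi_{J,h})\Pi_h u|||^2+|||\Pi_{J,h}(I-\Pi_h)u|||^2$ and then bound the second summand using that, on $(H_0^1(D),a)$, the operator $\Pi_{J,h}(I-\Pi_h)$ is the adjoint of $(I-\Pi_h)\Pi_{J,h}$ while (\ref{est-pi2}) holds for every $u\in H_0^1(D)$; either way one recovers Theorem~3.6 of \cite{gmp10}.
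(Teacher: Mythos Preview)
Your proposal is correct and matches the paper's approach exactly: the paper does not give an independent argument but simply invokes Theorem~3.6 of \cite{gmp10}, and what you have written is precisely a clean reproduction of that theorem's proof (rewriting $|||u_h-u_{J,h}|||^2$ via the Galerkin identities, idempotency, and $a$-self-adjointness of the two projectors, then applying Cauchy--Schwarz and (\ref{est-pi2})). Nothing further is needed.
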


Lemma \ref{lem-pi-3} gives the first convergence  result in Theorem \ref{det-thm}.

Following the proof of Theorem 3.8 in \cite{gmp10} and Lemma \ref{lem-pi-3}, we have the following lemma.
\begin{lemma} Let $u$ be the solution to Eq. (\ref{model-eq}) and $u_h\in V_h$ the standard MsFEM solution of Eq. (\ref{model-eq}). Then
\label{lem-pi-4}
\[
|||u-\Pi_{J,h}u|||\leq  \sqrt{2}\tilde{c} \big(\eta^{J+1}+\frac{\eta^{J+1}}{1-\eta^{J+1}}\big ) |||u||| + 2|||u-u_h|||.
\]
\end{lemma}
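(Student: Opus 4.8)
\textbf{Proof proposal for Lemma \ref{lem-pi-4}.}

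The plan is to mimic the argument of Theorem 3.8 in \cite{gmp10}, using $\Pi_{J,h}u = u_{J,h}$ together with the estimate from Lemma \ref{lem-pi-3}. The starting point is the decomposition
\[
u - \Pi_{J,h}u = (u - u_h) + (u_h - u_{J,h}),
\]
so that by the triangle inequality $|||u - \Pi_{J,h}u||| \le |||u - u_h||| + |||u_h - u_{J,h}|||$. Lemma \ref{lem-pi-3} already controls the second term by $(\tilde c)^{1/2}\big(\eta^{J+1} + \frac{\eta^{J+1}}{1-\eta^{J+1}}\big)^{1/2}|||u|||$, but this is the \emph{square root} of the quantity appearing in the target bound. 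To upgrade it to the linear (non-square-root) rate, one should not simply use the triangle inequality on the crude split above; instead, I would exploit the Galerkin orthogonality of $u_{J,h} = \Pi_{J,h}u$ in $V_{J,h}$ and re-route the estimate through the projections $\Pi_h$ and $\Pi_{J,h}$, precisely as in the proof of Lemma \ref{lem-pi-2}.

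Concretely, I would write
\[
|||u - \Pi_{J,h}u|||^2 = (k\nabla(u - \Pi_{J,h}u), \nabla(u - \Pi_{J,h}u)),
\]
and, inserting $\pm\,\Pi_h u$ and $\pm\,\Pi_{J,h}\Pi_h u$ into one of the factors, split the inner product into a term involving $u - u_h = (I-\Pi_h)u$ (which is handled by the standard MsFEM error $|||u-u_h|||$), a term involving $(I-\Pi_{J,h})\Pi_h u$, and a term involving $(I-\Pi_h)\Pi_{J,h}u$ — the last two of which are exactly the quantities bounded in Lemma \ref{lem-pi-2} by $\tilde c\big(\eta^{J+1} + \frac{\eta^{J+1}}{1-\eta^{J+1}}\big)|||u|||$. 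Using self-adjointness and idempotency of $\Pi_h$ and $\Pi_{J,h}$ to kill the cross terms (e.g.\ $(k\nabla(I-\Pi_h)u, \nabla w) = 0$ for $w\in V_h$, and similarly for $\Pi_{J,h}$), one collects the surviving pieces, applies Cauchy--Schwarz, divides through by $|||u-\Pi_{J,h}u|||$, and reads off
\[
|||u - \Pi_{J,h}u||| \le 2|||u - u_h||| + \sqrt2\,\tilde c\Big(\eta^{J+1} + \frac{\eta^{J+1}}{1-\eta^{J+1}}\Big)|||u|||,
\]
where the factor $2$ on $|||u-u_h|||$ and the $\sqrt2$ on the coarse-enrichment term arise from the number of inserted cross terms and from the $\|\sqrt{k/k_0}\|_{L^\infty}$-type constants already absorbed into $\tilde c$ in Lemma \ref{lem-pi-2}. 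Since the bound in Lemma \ref{lem-pi-2} is already linear in $\eta^{J+1}+\frac{\eta^{J+1}}{1-\eta^{J+1}}$, no square root appears, which is the whole point of routing through $\Pi_{J,h}$ rather than through Lemma \ref{lem-pi-3} directly.

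The main obstacle will be bookkeeping the orthogonality relations correctly: one must be careful that $\Pi_h$ is the $k$-elliptic projection onto $V_h$ while $\Pi_{J,h}$ projects onto $V_{J,h}$, and these two spaces are distinct, so $\Pi_h\Pi_{J,h} \ne \Pi_{J,h}\Pi_h$ in general and cross terms like $(k\nabla(I-\Pi_h)\Pi_{J,h}u, \nabla(I-\Pi_{J,h})\Pi_h u)$ do not vanish outright — they must be re-expressed via Lemma \ref{lem-pi-2} and Cauchy--Schwarz rather than discarded. Getting the constants exactly $2$ and $\sqrt2$ requires tracking which inner products are nonnegative (hence can be dropped on the correct side of an inequality) and which must be bounded; this is routine but delicate, and is precisely the content of Theorem 3.8 in \cite{gmp10}, which I would cite for the detailed constant-chasing. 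Everything else — the triangle inequality, Cauchy--Schwarz, and the self-adjointness/idempotency of the elliptic projections — is standard.
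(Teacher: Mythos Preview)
Your proposal is correct and matches the paper's own approach: the paper does not give an explicit proof either, but simply states that the result follows ``following the proof of Theorem 3.8 in \cite{gmp10} and Lemma \ref{lem-pi-3},'' which is exactly the route you describe --- Galerkin orthogonality for $\Pi_{J,h}$, inserting $\Pi_h u$ and $\Pi_{J,h}\Pi_h u$, and invoking the linear-in-$\eta^{J+1}$ estimate of Lemma \ref{lem-pi-2} (which is the content underlying Lemma \ref{lem-pi-3}) rather than the square-root bound. Your observation that the naive triangle inequality plus Lemma \ref{lem-pi-3} would only give the square-root rate, and that one must instead route through the projections to recover the linear rate, is precisely the point of appealing to the argument of \cite[Theorem 3.8]{gmp10}.
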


Because $u_{J,h}=\Pi_{J,h}u$, the second convergence result in Theorem \ref{det-thm} follows  Lemma \ref{lem-pi-4} immediately.

\section{Analysis for stochastic coefficient}
\label{stochanalysis}
We consider the stochastic elliptic equation (\ref{model-spde}). To make $k(x,\omega)$  positive,  we consider a logarithmic   stochastic field, $k(x, \omega):=\exp(Y(x, \omega))$,
where  $Y(x, \omega)$ is a stochastic field with second moment.

We assume that $Y(x, \omega)$ admits the following
 truncated Karhunen-Lo\`{e}ve expansion (see \cite{xiu10} for details), i.e.,
\begin{equation}
\label{KLE}
Y(x, \omega)=E[Y]+\sum_{i=1}^n \sqrt{\lambda_i} b_i(x) \theta_i(\omega).
\end{equation}
Here $
(b_i, b_j)_{L^2(D)}=\delta_{ij}, \ \  \lambda_1 \geq \lambda_2\cdots, \geq \lambda_m\cdots,
\ \  \lim_{m\longrightarrow \infty} \lambda_m =0.
$
Let $\Theta:=(\theta_1,\cdots, \theta_m, \theta_{m+1},\cdots, \theta_n):=  (\Theta_0, \Theta_1)\in \mathbb{R}^n$,
 where $\Theta_0:=(\theta_1,\cdots, \theta_m) \in \mathbb{R}^m$ and $\Theta_1\in \mathbb{R}^{n-m}$.
Then stochastic field $k(x, \omega)$ can be parameterized to a finite-dimensional random field $k(x, \Theta)$.
We define the splitting of $k(x,\Theta)$ by
\begin{equation}
\label{split-random-k}
k(x, \Theta)=k_0(x,\Theta_0)+k_1(x, \Theta),
\end{equation}
where $k_0(x,\Theta_0):=\exp(E[Y]+\sum_{i=1}^m \sqrt{\lambda_i} b_i(x) \theta_i(\omega))$ ($m<n$) and $k_1(x, \Theta)=k(x,\Theta)-k_0(x,\Theta_0)$.

\subsection{Convergence analysis}
In the subsection, we will present convergence analysis when the random field $k(x, \Theta)$ admits the splitting described  in (\ref{split-random-k}).

We define $L^2(\Omega)$ to be the square integrable space with the probability measure $\rho(\Theta)d\omega$,
where $\rho(\Theta)$ is the joint probability density function of $\Theta$.
We can use Theorem \ref{det-thm} to derive an error estimate of the stochastic elliptic equation (\ref{model-spde}).
\begin{theorem} \label{stochbound}
Let $u_h$ and $u_{J, h}$ be the MsFEM solution of stochastic elliptic equation  in space $V_h\times L^2(\Omega)$ and $V_{J,h}\times  L^2(\Omega)$, respectively, for the stochastic equation (\ref{model-spde}).
If $\eta:=\|\frac{k_1(x, \Theta)}{k_0(x,\Theta_0)}\|_{L^{\infty}(D\times \Omega)}< 1$, then
\[
 ||\sqrt{k}\nabla(u_h-u_{J, h})||_{L^2(D)\otimes L^2(\Omega)} \leq (\tilde{c})^{1/2} \big(\eta^{J+1}+\frac{\eta^{J+1}}{1-\eta^{J+1}}\big )^{1/2} ||\sqrt{k}\nabla u||_{L^2(D)\otimes L^2(\Omega)},
 \]
where  $\tilde{c}=\sqrt{2} \|\sqrt{\frac{k_0(x, \Theta_0)}{k(x, \Theta)}}\|_{L^{\infty}(D\times \Omega)}$.
\end{theorem}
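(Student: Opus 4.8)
The plan is to reduce the claim to the deterministic estimate of Theorem~\ref{det-thm}(1) applied one realization at a time, and then integrate the squared inequality over the sample space. Fix, for a.e.\ $\omega\in\Omega$, the realization $\Theta=\Theta(\omega)$ together with the induced splitting $k(\cdot,\Theta)=k_0(\cdot,\Theta_0)+k_1(\cdot,\Theta)$ from~(\ref{split-random-k}). Since the bilinear form of the stochastic problem posed in $V_h\times L^2(\Omega)$ (respectively $V_{J,h}\times L^2(\Omega)$) is the $\rho(\Theta)d\omega$-integral of the deterministic forms $(k(\cdot,\Theta)\nabla\,\cdot\,,\nabla\,\cdot\,)$, the Galerkin solutions decouple: for a.e.\ $\omega$, $u_h(\cdot,\Theta)$ and $u_{J,h}(\cdot,\Theta)$ are exactly the standard and the new MsFEM solutions of the deterministic equation~(\ref{model-eq}) with coefficient $k(\cdot,\Theta)$ and splitting as above, posed in the Section~2 spaces $V_h$ and $V_{J,h}$ built from that frozen coefficient. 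I would record this decoupling as a one-line remark rather than dwell on it.

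Next I would check that the constants appearing in Theorem~\ref{det-thm} for the frozen coefficient can be bounded by $\omega$-independent quantities. Because the coarse cells cover $D$, for any bounded function $g$ one has $\max_{K\in\mathfrak{T}_h}\|g(\cdot,\Theta)\|_{L^\infty(K)}=\|g(\cdot,\Theta)\|_{L^\infty(D)}\le\|g\|_{L^\infty(D\times\Omega)}$ for a.e.\ $\omega$. Taking $g=k_1/k_0$ gives $\eta_K(\Theta)\le\eta<1$ for every $K$ and a.e.\ $\omega$, so the splitting hypothesis~(\ref{assum1}) holds for the frozen problem and $\eta(\Theta):=\max_K\eta_K(\Theta)\le\eta$; taking $g=\sqrt{k_0/k}$ gives $\tilde c(\Theta)\le\tilde c$.

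Then I apply Theorem~\ref{det-thm}(1) for a.e.\ $\omega$ and remove the $\omega$-dependence of the right-hand side by monotonicity. The function $g(t)=t+t/(1-t)$ satisfies $g'(t)=1+(1-t)^{-2}>0$ on $[0,1)$, hence is increasing, and since $\eta(\Theta)^{J+1}\le\eta^{J+1}<1$ we get $g(\eta(\Theta)^{J+1})\le g(\eta^{J+1})$; combined with $\tilde c(\Theta)\le\tilde c$ this yields, for a.e.\ $\omega$,
\[
\big\|\sqrt{k(\cdot,\Theta)}\,\nabla\big(u_h(\cdot,\Theta)-u_{J,h}(\cdot,\Theta)\big)\big\|_{0,D}^2
\le\tilde c\,\Big(\eta^{J+1}+\tfrac{\eta^{J+1}}{1-\eta^{J+1}}\Big)\,\big\|\sqrt{k(\cdot,\Theta)}\,\nabla u(\cdot,\Theta)\big\|_{0,D}^2 .
\]
Integrating this against $\rho(\Theta)d\omega$ over $\Omega$, using Tonelli to interchange the $x$- and $\omega$-integrations, recognizing $\int_\Omega\int_D k|\nabla v|^2\,dx\,d\omega=\|\sqrt k\,\nabla v\|_{L^2(D)\otimes L^2(\Omega)}^2$, and finally taking square roots gives exactly the asserted bound.

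The argument is essentially bookkeeping; the one genuinely technical point is the measurability in $\omega$ of the integrands, e.g.\ of $\omega\mapsto\|\sqrt{k(\cdot,\Theta)}\,\nabla(u_h(\cdot,\Theta)-u_{J,h}(\cdot,\Theta))\|_{0,D}^2$, which is needed to legitimize the integration and the use of Tonelli. This follows from the joint measurability of the stochastic MsFEM solutions, which is part of the well-posedness of the two stochastic formulations, and I would invoke it rather than reprove it. A secondary point worth a sentence is to confirm that the fibers of the stochastic trial spaces really are the deterministic spaces $V_h$, $V_{J,h}$ produced by the Section~2 construction applied to $k(\cdot,\Theta)$ with its splitting; granting this, nothing beyond Theorem~\ref{det-thm} is needed.
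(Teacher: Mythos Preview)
Your proposal is correct and is exactly the route the paper intends: the paper offers no explicit proof for this theorem, merely noting that one ``can use Theorem~\ref{det-thm} to derive an error estimate'' for the stochastic equation, and your argument---freeze a realization, apply Theorem~\ref{det-thm}(1), bound the realization-dependent constants by their $L^\infty(D\times\Omega)$ counterparts via monotonicity, square and integrate---is precisely the natural way to carry that out. If anything, you have supplied more care (the monotonicity of $t\mapsto t+t/(1-t)$, the measurability remark) than the paper does.
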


The eigenvalues $\{\lambda_i\}$ play an important role to control $|{k_1\over k_0}|$. To this end, we define the
energy ratio $E(m)$ by $E(m)=\frac{\sum_{i=1}^m \sqrt{\lambda_i}}{\sum_i^n \sqrt{\lambda_i}}$.
Then we can show that  $\|\frac{k_1(x,\Theta)}{k_0(x, \Theta_0)}\|_{L^{\infty}(D\times \Omega)}$ is proportional to $1-E(m)$
under certain conditions.
\begin{proposition}
\label{k1/k0-random-prop}
Let $\|\theta_i\|_{L^{\infty}(\Omega)}\leq C_{\theta}$ uniformly for all $m+1\leq i\leq n$.
If $\text{cov}[Y](x_1,x_2)$ are piecewise analytic in $D\times D$, then
there exists constant a $C_Y$ such that for $m$   large enough
\[
\|\frac{k_1(x,\Theta)}{k_0(x,\Theta_0)}\|_{L^{\infty}(D\times \Omega)}\leq C_Y\big(1-E(m)\big),
\]
where $C_Y= {7\over 4} C_{\theta} \max_{m+1\leq i\leq n}  \{|b_i|_{L^{\infty}(D)} \}$.
\end{proposition}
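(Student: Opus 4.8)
The plan is to exploit the lognormal (multiplicative) structure of the coefficient to rewrite the quotient $k_1/k_0$ as $e^{S}-1$ for an explicit ``tail'' $S$ of the Karhunen-Lo\`{e}ve sum, and then control $S$ pointwise. Since $k=\exp(Y)$ with $Y$ given by (\ref{KLE}) and $k_0(x,\Theta_0)=\exp\!\big(E[Y]+\sum_{i=1}^m\sqrt{\lambda_i}b_i(x)\theta_i\big)$, the common factor cancels, giving
\[
\frac{k_1(x,\Theta)}{k_0(x,\Theta_0)}=\frac{k}{k_0}-1=\exp\!\big(S(x,\Theta_1)\big)-1,\qquad S(x,\Theta_1):=\sum_{i=m+1}^{n}\sqrt{\lambda_i}\,b_i(x)\,\theta_i(\omega).
\]
Thus the proposition reduces to a uniform bound on $\|\exp(S)-1\|_{L^{\infty}(D\times\Omega)}$.

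First I would bound $S$ pointwise. Using $\|\theta_i\|_{L^{\infty}(\Omega)}\le C_{\theta}$ for $i\ge m+1$ and the triangle inequality,
\[
|S(x,\Theta_1)|\le C_{\theta}\Big(\max_{m+1\le i\le n}|b_i|_{L^{\infty}(D)}\Big)\sum_{i=m+1}^{n}\sqrt{\lambda_i}=C_{\theta}\Big(\max_{m+1\le i\le n}|b_i|_{L^{\infty}(D)}\Big)\big(1-E(m)\big)\sum_{i=1}^{n}\sqrt{\lambda_i},
\]
the last equality being just the definition of $E(m)$. The analyticity hypothesis enters here: piecewise analyticity of $\mathrm{cov}[Y]$ forces (at least) exponential decay of the eigenvalues $\lambda_i$, hence $\sum_i\sqrt{\lambda_i}<\infty$ and the $m$-tail $\sum_{i=m+1}^{n}\sqrt{\lambda_i}$ tends to $0$ as $m\to\infty$; in particular, for $m$ large enough we may assume $|S|\le 1$ a.e.\ in $D$ and a.s.\ in $\Omega$. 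Then the scalar inequality $|e^{t}-1|\le e^{|t|}-1\le (e-1)|t|\le \frac{7}{4}|t|$, valid for $|t|\le 1$ (this is the source of the $\frac{7}{4}$, since $e-1<1.75$), applied with $t=S$ and combined with the previous display yields
\[
\Big\|\frac{k_1(x,\Theta)}{k_0(x,\Theta_0)}\Big\|_{L^{\infty}(D\times\Omega)}\le \frac{7}{4}\,C_{\theta}\Big(\max_{m+1\le i\le n}|b_i|_{L^{\infty}(D)}\Big)\big(1-E(m)\big),
\]
which is the assertion with $C_Y=\frac{7}{4}C_{\theta}\max_{m+1\le i\le n}|b_i|_{L^{\infty}(D)}$, after normalizing (or absorbing) the $O(1)$ factor $\sum_{i=1}^{n}\sqrt{\lambda_i}$ into the constant.

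I expect the only genuinely nontrivial ingredient to be the eigenvalue-decay step: the implication ``$\mathrm{cov}[Y]$ piecewise analytic $\Rightarrow$ $\sum_i\sqrt{\lambda_i}<\infty$ with vanishing $m$-tail'' rests on classical spectral-regularity theory for compact self-adjoint integral operators with analytic kernels, which I would invoke rather than reprove, and it is precisely what makes the hypothesis ``$m$ large enough'' meaningful. Everything else---the cancellation in the first display, the triangle inequality, and the elementary exponential bound---is routine; the remaining care is bookkeeping of constants so that the stated $C_Y$ is reached exactly.
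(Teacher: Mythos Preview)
Your proposal is correct and follows essentially the same route as the paper: rewrite $k_1/k_0=\exp(S)-1$ with $S=\sum_{i>m}\sqrt{\lambda_i}b_i\theta_i$, use piecewise analyticity of the covariance (the paper cites \cite{Schwab} for the exponential eigenvalue decay) to force $\|S\|_{L^\infty}\le 1$ for $m$ large, apply $|e^t-1|\le\frac{7}{4}|t|$ on $|t|\le 1$, and bound $|S|$ by the triangle inequality. Your remark about the stray factor $\sum_{i=1}^n\sqrt{\lambda_i}$ is on point---the paper's final inequality silently passes from $\sum_{i>m}\sqrt{\lambda_i}$ to $1-E(m)$ as if this total were $\le 1$, so the stated constant $C_Y$ tacitly absorbs (or normalizes away) exactly the factor you flagged.
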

\begin{proof}
Since $k_1=k_0 (\frac{k}{k_0}-1)=k_0\big( \exp(\log k-\log k_0)-1\big)$, then it follows that
\[
|\frac{k_1}{k_0}|=\left| \exp \left( \sum_{i=m+1}^n \sqrt{\lambda_i} b_i(x)\theta_i(\omega) \right)-1 \right|
\]
Because  $\text{cov}[Y](x_1,x_2)$ is piecewise analytic in $D\times D$, then there exist positive constants $C_0$  independent of $m$ and $d$  (see \cite{Schwab}) such that for given $0<s<{1\over 2}$
\begin{eqnarray}
\label{ineq01-prop4.1}
\| \sum_{i=m+1}^n \sqrt{\lambda_i} b_i(x)\theta_i(\omega)\|_{L^{\infty}(D\times \Omega)} \leq C_1 \exp\big(-C_0({1\over 2}-s)m^{1\over d}\big),
\end{eqnarray}
where $C_1=C_1(C_{\theta},s,d, C_0)$. If $m$ is large enough, inequality (\ref{ineq01-prop4.1}) implies that
\[
\| \sum_{i=m+1}^n \sqrt{\lambda_i} b_i(x)\theta_i(\omega))\|_{L^{\infty}(D\times \Omega)}\leq 1.
\]
Due to the inequality $|e^x-1|\leq {7\over 4} |x|$  for $|x|\leq 1$,
then  it follows
\begin{eqnarray}
\begin{split}
\|\frac{k_1}{k_0}\|_{L^{\infty}(D\times \Omega)} & \leq {7\over 4} \|\sum_{i=m+1}^n \sqrt{\lambda_i} b_i(x)\theta_i(\omega))\|_{L^{\infty}(D\times \Omega)}\\
&\leq {7\over 4} C_{\theta} \max_{m+1\leq i\leq n}  \{|b_i|_{L^{\infty}(D)} \}(\sum_{i=m+1}^n \sqrt{\lambda_i})\\
& \leq {7\over 4} C_{\theta} \max_{m+1\leq i\leq n}  \{|b_i|_{L^{\infty}(D)} \}(1-E(m)).
\end{split}
\end{eqnarray}
The proof is completed.
\end{proof}

Theorem  \ref{stochbound} and Proposition \ref{k1/k0-random-prop} show that  the convergence rate of
the proposed MsFEM for the stochastic equation (\ref{model-spde})   depends on the energy ratio $E(m)$.  Then we immediately have the following theorem.

\begin{theorem}
Suppose that the assumptions in Proposition \ref{k1/k0-random-prop} hold. If $m$ is large enough such that
$C_Y\big(1-E(m)\big)$ is less than  1, then
\[
||\sqrt{k}\nabla(u_h-u_{J, h})||_{L^2(D)\otimes L^2(\Omega)} \leq C\big(C_Y\big(1-E(m))\big)^{\frac{J+1}{2}}.
\]
\end{theorem}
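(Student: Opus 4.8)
The plan is to read this statement as a corollary of Theorem~\ref{stochbound} and Proposition~\ref{k1/k0-random-prop}, the only work being to recast the factor $\eta^{J+1}+\frac{\eta^{J+1}}{1-\eta^{J+1}}$ into the clean geometric form claimed.

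First I would set $\eta_0:=C_Y\big(1-E(m)\big)$ and note that the hypothesis ``$m$ large enough so that $C_Y\big(1-E(m)\big)<1$'' does two things at once: it is precisely the regime in which Proposition~\ref{k1/k0-random-prop} applies (so that $\eta:=\|k_1(x,\Theta)/k_0(x,\Theta_0)\|_{L^\infty(D\times\Omega)}\leq \eta_0$), and it guarantees $\eta\leq\eta_0<1$, which is exactly the condition needed to invoke Theorem~\ref{stochbound}. Applying that theorem gives
\[
\|\sqrt{k}\nabla(u_h-u_{J,h})\|_{L^2(D)\otimes L^2(\Omega)}\leq (\tilde{c})^{1/2}\big(\eta^{J+1}+\tfrac{\eta^{J+1}}{1-\eta^{J+1}}\big)^{1/2}\|\sqrt{k}\nabla u\|_{L^2(D)\otimes L^2(\Omega)}.
\]

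Next I would simplify the bracket. Writing $t=\eta^{J+1}\in(0,1)$, one has the identity $t+\frac{t}{1-t}=\frac{t(2-t)}{1-t}$, and an elementary derivative check shows $t\mapsto\frac{t(2-t)}{1-t}$ is increasing on $[0,1)$; since $t=\eta^{J+1}$ is increasing in $\eta$ and $\eta\leq\eta_0$, the bracket is at most $\frac{\eta_0^{J+1}(2-\eta_0^{J+1})}{1-\eta_0^{J+1}}$. Bounding $2-\eta_0^{J+1}\leq 2$ and $1-\eta_0^{J+1}\geq 1-\eta_0$ (valid since $\eta_0^{J+1}\leq\eta_0<1$ for $J\geq 0$), the bracket is at most $\frac{2}{1-\eta_0}\eta_0^{J+1}$. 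Substituting back and recalling $\eta_0=C_Y\big(1-E(m)\big)$ yields
\[
\|\sqrt{k}\nabla(u_h-u_{J,h})\|_{L^2(D)\otimes L^2(\Omega)}\leq (\tilde{c})^{1/2}\big(\tfrac{2}{1-\eta_0}\big)^{1/2}\|\sqrt{k}\nabla u\|_{L^2(D)\otimes L^2(\Omega)}\big(C_Y(1-E(m))\big)^{(J+1)/2},
\]
so one takes $C:=(\tilde{c})^{1/2}\big(2/(1-\eta_0)\big)^{1/2}\|\sqrt{k}\nabla u\|_{L^2(D)\otimes L^2(\Omega)}$.

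There is essentially no obstacle; the statement is a direct consequence of the two earlier results. The only points deserving a sentence of care are that the constant $C$ depends on the data and on $\eta_0$ (hence, through $E(m)$, on the truncation level $m$) but is \emph{independent of} $J$, so the bound genuinely exhibits geometric decay in $J$; and that one must explicitly observe that the smallness condition $C_Y(1-E(m))<1$ simultaneously licenses Proposition~\ref{k1/k0-random-prop} and keeps $\eta<1$ for Theorem~\ref{stochbound}.
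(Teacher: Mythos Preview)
Your proposal is correct and matches the paper's approach: the paper presents this theorem as an immediate consequence of Theorem~\ref{stochbound} and Proposition~\ref{k1/k0-random-prop} and gives no explicit proof, so you have simply filled in the algebraic details (the monotonicity argument and the bound $\eta^{J+1}+\frac{\eta^{J+1}}{1-\eta^{J+1}}\leq \frac{2}{1-\eta_0}\eta_0^{J+1}$) that the paper omits. Your observation that the constant $C$ depends on $m$ through $\eta_0$ but not on $J$ is a useful clarification beyond what the paper states.
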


If the stochastic field  $Y(x, \omega)$ is Gaussian, then its
covariance function  can analytically be extended to the whole complex plane $\mathbb{C}^d$, which is stronger than  piecewise analytic.
The  eigenvalues $\{\lambda_i\}$ associated with Gaussian fields decay very fast. Consequently, $1-E(m)$ will decay very fast as $m$ increases.
Due to central limit theorem, the Gaussian stochastic process is very interesting for applications.

For stochastic simulation, we can use Monte Carlo methods. The main disadvantage of Monte Carlo methods
is slow convergence. To overcome the disadvantage, here  we use  stochastic collocation methods to discretize random parameter space.
The proposed  MsFEM is used to discretize the spatial variable.  Combined with  the new MsFEM,   we
develop modified stochastic collocation methods  to reduce the dimension of the random parameter space. Computational complexity
is addressed for  the modified stochastic collocation method.

\subsection{Stochastic collocation methods and random parameter space reduction} \label{paramreduc}
In this subsection, we use stochastic collocation methods to discretize the random parameter space  and
 show that combining  the proposed  multiscale method with stochastic collocation methods
can reduce the dimension of the random parameter space, which
is very important for simulations in high-dimensional random space.

Let $\{\Theta_0^1,\Theta_0^2,\cdots, \Theta_0^s\}\subset \mathbb{R}^m$ be $s$ collocation points scattered  in
random parameter space associated with an interpolation operator $I_m$.
 Let $v(\Theta_0)\in C(\mathbb{R}^m)$ be a deterministic solution depending
on random parameters $\Theta_0$. Then given a realization $\Theta_0\in \mathbb{R}^m$, the collocation solution $v_m$ is defined
by $v_m(\Theta_0)=I_m v(\Theta_0)$.  We usually use the roots of an orthogonal polynomial (e.g., Hermite polynomial or Chebyshev polynomial)
to find the interpolation points. One can select  different  collocation points and use a different
interpolation operator $I_m$ to obtain different stochastic collocation methods,
for example, full tensor product collocation \cite{bnt07} and Smolyak sparse grid collocation \cite{bnr00}.

Suppose that the Green's function $G(x, y, \Theta_0)$ in (\ref{eq-green}) depends on the random parameter  $\Theta_0\in \mathbb{R}^m$
and  $I_m G(x, y, \Theta_0)$ is the collocation solution for an arbitrary $\Theta_0 \in \mathbb{R}^m$.
For any arbitrary realization $\Theta:=(\Theta_0, \Theta_1)\in \mathbb{R}^m\times \mathbb{R}^{n-m} $,
we define a modified interpolation operator $\tilde{I}_m$  for  $\Pi l$ and $\tx_j$ ($j=0,\cdots, J$).
We define them as follows:

\begin{equation}
\label{interp-pi-l}
\tilde{I}_m [(\Pi l)(x, \Theta_0)]:=\int_K k_0(y, \Theta_0)  \nabla_y I_m G(x,y, \Theta_0)\cdot \nabla_y l dy.
\end{equation}
We note that we can get the value of $k_0(y, \Theta_0)$ and do not employ interpolation $I_m$ for $k_0(y, \Theta_0)$.
Then we similarly  compute $\tilde{I}_m [\tilde{\xi}_0]$ by
\[
\tilde{I}_m [\tilde{\xi}_0(x, \Theta)]:
= \int_K \, k_1(y, \Theta) \nabla_y I_m G(x,y, \Theta_0) \cdot \bigg(\nabla_y \tilde{I}_m [(\Pi l)(y, \Theta_0)] -\nabla_y l(y) \bigg) dy,
\]
and compute $\tilde{I}_m [\tilde{\xi}_k]$, $k=1,2,3,\cdots$ by performing

\begin{equation} \label{greeninterp}
\tilde{I}_m [\tilde{\xi}_j(x, \Theta)]:
=- \int_K \, k_1(y, \Theta) \nabla_y I_m G(x,y, \Theta_0) \cdot \nabla_y \tilde{I}_m [\tx_{j-1}(y,\Theta)] dy.
\end{equation}

By the definitions of $\tilde{I}_m [\Pi l]$ and $\tilde{I}_m[\tx_j]$ ($j=0, \cdots, J$),
we have only used $I_m(G(x, y, \Theta_0))$ to compute $\tilde{I}_m [\Pi l]$ and $\tilde{I}_m[\tx_j]$ ($j=0, \cdots, J$). This interpolation is performed on the relatively low
dimensional parameter space $\mathbb{R}^m$.
Since the new multiscale basis function is defined as
$\phi_J(x, \Theta)=l(x)-\Pi l(x, \Theta_0)+\sum_{j=0}^J {\tx_j(x, \Theta)}$,   the computation
of the interpolation for $\phi_J(x, \Theta)$ only involves the $m$-dimensional interpolation
$I_m(G(x, y, \Theta_0))$.

To address the complexity, we use matrix and inner products to discuss  the computation of the interpolations in (\ref{interp-pi-l})
and (\ref{greeninterp}).
Due to (\ref{pi-l-fe}) and (\ref{interp-pi-l}), it follows that
\begin{equation}
\label{D-Green1}
\tilde{I}_m [\Pi l (x, \Theta_0)]=\bigg( [I_m M_0^{-1}(\Theta_0)] \vec{b}(x)\bigg)^T v_0(\Theta_0).
\end{equation}
By (\ref{tx-fe}) and (\ref{greeninterp}), we have for $j=0, \cdots, J$,
\begin{eqnarray}
\label{D-Green2}
\begin{split}
\tilde{I}_m [\tilde{\xi}_j(x, \Theta)]&=
(-1)^j \bigg([I_m M_0^{-1}(\Theta_0)] \vec{b}(x)\bigg)^T\bigg(M_1(\Theta) [I_m M_0^{-1}(\Theta_0)]\bigg)^j\\
      & \times  \bigg(M_1(\Theta) [I_m M_0^{-1}(\Theta_0)] v_0(\Theta_0)-v_1(\Theta)\bigg).
\end{split}
\end{eqnarray}
By (\ref{D-Green1}) and (\ref{D-Green2}), we  compute  $I_m M_0^{-1}(\Theta_0)$, $v_0(\Theta_0)$, $v_1(\Theta)$ and $M_1(\Theta)$
 to obtain  $\tilde{I}_m [(\Pi l) (x, \Theta_0)$ and $\tilde{I}_m [\tilde{\xi}_j(x, \Theta)]$ ($j=0,\cdots, J$).
The computations of $v_0(\Theta_0)$, $v_1(\Theta_1)$ and $M_1(\Theta)$   are independent each other and
very efficient in parallel.     The dominant computation
lies in $I_m M_0^{-1}(\Theta_0)$ and solely  depends on the dimension $m$
of the random parameter space. We can very efficiently compute $I_m M_0^{-1}(\Theta_0)$
in  parallel as well.

If we use the standard multiscale basis function defined in equation (\ref{MsFE-basis1}),
then the basis function is $\phi:=\phi(x, \Theta)$ for an arbitrary realization $\Theta\in \mathbb{R}^n$.
We interpolate  the basis function $\phi(x, \Theta)$ in the full random space $\mathbb{R}^n$ ($n>m$). If $n$ is large,
the interpolation on $\mathbb{R}^n$ is computationally expensive and prohibitive.

If we use full tensor product collocation and polynomials with degree $q$ for each component of
$\Theta\in \mathbb{R}^n$,  then we have $(q+1)^n$ collocation points for full-space interpolation.
Consequently, we need to compute $(q+1)^n$ multiscale basis equations $\phi$ for each
vertex for the collocation.  However, if we use the technique of the Green's function
for the new multiscale basis functions, then we have $(q+1)^m$  collocation points and
compute only $(q+1)^m$ Green's functions (or Green's matrix $M_0$) to generate the proposed multiscale basis functions.  Consequently,
the ratio $\alpha_{ftc}$  between  the number of collocation points in the proposed multiscale basis function  and in
the standard multiscale basis function is
\[
 \alpha_{ftc}= (q+1)^{m-n}.
 \]
For example, if $n=20$, $m=10$ and $q=2$, then the $\alpha_{ftc}=3^{-10}$. This means that the computational effort
 for using the proposed  multiscale method may be considerably decreased compared to the standard multiscale method
 when the full tensor product collocation is employed.

Let $H(n+L,  n)$ denote the interpolation nodes for Smolyak sparse grid collocation \cite{bnr00} at dimension $n$ and interpolation level $L$. Although Smolyak sparse grid
collocation requires much fewer nodes than the full tensor product collocation to achieve the similar accuracy,
the number of nodes $H(n+L,  n)$ increases  very quickly as $n$ increases.
The ratio $\alpha_{ftc}$  between  the number of collocation points in the proposed multiscale basis function  and in
the standard multiscale basis function is given by
\[
 \alpha_{sgc}= \frac{H(m+L, m)}{H(n+L, n)}\approx (\frac{m}{n})^L  \quad \text{for $m\gg 1$, $n\gg 1$},
 \]
 where we have used the fact $H(n+L, n)\approx \frac{2^L}{L!} n^L$ for $n\gg 1$ \cite{bnr00}.   For example, if we take interpolation level $L=2$, $m=10$ and $n=20$,  then $\alpha_{sgc}=\frac{221}{841}\approx {1\over 4}$. This means that the computation
 time of the proposed  multiscale method is almost ${1\over 4}$ of the standard multiscale method in parallel setting.

 Smolyak sparse grid collocation is known to have the same asymptotic accuracy as full tensor product collocation, while requiring
many  fewer interpolation points as the parameter dimension increases. We will use Smolyak sparse grid collocation for the numerical tests.
The stochastic approximation of the Smolyak sparse grid collocation method, $\|v-I_m v\|$,  depends on the total
number of sparse grid collocation nodes and the dimension $m$ of the random parameter space.
The convergence analysis in \cite{ntw08} implies that the convergence  of Smolyak sparse grid collocation  with respect to the number of Smolyak nodes is exponential, but depends on the parameter dimension $m$.  If $m>>1$, then the exponential convergence rate behaves algebraically.

Using the modified stochastic collocation method described in (\ref{interp-pi-l}) and (\ref{greeninterp}), we define
the corresponding collocation basis function $\tilde{\phi}_J(x, \Theta)=l(x)-\tilde I_m [\Pi l(x, \Theta_0)]+ \sum_{j=0}^J \tilde{I}_m [\tx_{j} (x, \Theta)]$.
Let $\tilde{u}_{J, h}$ be the collocation solution using the basis $\tilde{\phi}_J(x, \Theta)$.  Then the total error
$||\sqrt{k}\nabla(u_h-\tilde {u}_{J, h})||_{L^2(D)\otimes L^2(\Omega)}$ includes two parts:
splitting error $||\sqrt{k}\nabla(u_h-u_{J, h})||_{L^2(D)\otimes L^2(\Omega)}$ and collocation error $||\sqrt{k}\nabla(u_{J, h}-\tilde{u}_{J, h})||_{L^2(D)\otimes L^2(\Omega)}$.
It can be formally expressed by
\[
||\sqrt{k}\nabla(u_h-\tilde {u}_{J, h})||_{L^2(D)\otimes L^2(\Omega)}\leq e_{spl} +e_{col},
\]
where $e_{spl}=O\bigg(\big(1-E(m)\big)^{\frac{J+1}{2}}\bigg)$ and $e_{col}=e_{col}(J, m, L)$.
There exists a trade-off between the splitting error $e_{spl}$ and the collocation error $e_{col}$.
The numerical results in Section $5$ illustrate the finding.

Using the proposed stochastic collocation methods, computation  of the MsFEM basis function $\tilde{\phi}_J$  is summarized in Algorithm \ref{alg:col-msfem}.

\begin{algorithm}[h]\caption{Computing  MsFEM basis functions $\tilde{\phi}_J$ using the parameter reduction collocation method}\label{alg:col-msfem}
\begin{enumerate}
\item Choose collocation samples $\{\Theta_0^i \}_{i=1}^s\subset \mathbb{R}^m$;
\item Assemble Green's matrices  $M_0(\Theta_0^i)$ ($i=1, \cdots, s$) for all collocation samples independently;
\item Given an arbitrary realization $\Theta:=(\Theta_0, \Theta_1)\in \mathbb{R}^m\times \mathbb{R}^{n-m}$,
assemble vectors $v_0(\Theta_0)$ and $v_1(\Theta_1)$ and matrix $M_1(\Theta)$ independently;
\item Compute $I_m M_0^{-1}(\Theta_0)$ indpendently;
\item Compute $\tilde{I}_m [(\Pi l) (x, \Theta_0)$ by (\ref{D-Green1}) and $\tilde{I}_m [\tilde{\xi}_j(x, \Theta)]$ ($j=0,\cdots, J$) by
(\ref{D-Green2}) independently;
\item Construct the interpolated basis function $\tilde{\phi}_J(x, \Theta):=l(x)-\tilde{I}_m [(\Pi l) (x, \Theta_0)+\sum_{j=0}^J \tilde{I}_m [\tilde{\xi}_j(x, \Theta)]$.
\end{enumerate}
\end{algorithm}


 \section{Numerical Results}
 In this section we offer a number of representative numerical results to verify the analysis and evaluate the performance of the proposed method.

 \subsection{Deterministic basis function} \label{detbasis}
 To begin, we consider the analysis offered in Sect.~\ref{converge}. In particular, we are initially interested in verifying the convergence properties of a single, deterministic basis function as described in Lemma.~\ref{singlethm} (or Prop. ~\ref{prop3.2}). In this subsection we consider two distinct cases of coefficient examples. We first consider a coefficient generated by a Karhunen-Lo\`{e}ve expansion and a coefficient constructed from a log-normal distribution. See Fig.~\ref{logperm} for illustrations of each coefficient plotted on the log scale.

 \begin{figure}
\centering
   \includegraphics[width=0.6\textwidth]{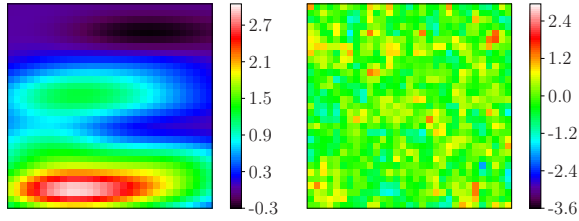}
   \caption{KLE (left) and log-normal (right) coefficients posed on a $30 \times 30$ mesh (log scale) }
   \label{logperm}
\end{figure}
To generate the first test coefficient on an arbitrary coarse element $K$ we employ the KLE expansion from Eq.~\eqref{KLE}. In our case we use the correlation function
\begin{equation} \label{corrfunct}
\text{cov}[Y](\mathbf{x}_1, \mathbf{x}_2) := R(x_1, y_1; x_2, y_2) = \sigma^2 \text{exp} \left( - \frac{|x_1 - x_2|^2}{2 l_x} - \frac{|y_1 - y_2|^2}{2 l_y} \right),
\end{equation}
where $\sigma^2$ is the variance, and $l_x, l_y$ denote the correlation lengths in the $x-$ and $y-$directions, respectively. We consider an elliptic coefficient which is generated on a $30 \times 30$ mesh. For the variance we use $\sigma^2 =  2.25$ and for the correlation lengths we use $l_x = 0.2$ and $l_y = 0.05$. For all examples we truncate the original KL expansion at $n=20$ terms to obtain the full coefficient $k = k_0 + k_1$. Then, in order to split the coefficient accordingly, we may choose a variety of $m$ values and employ Eq.~\eqref{split-random-k} for the splitting. For example, we may use $m=5$ terms to obtain $k_0$, and $k_1=k-k_0$. See Fig.~\ref{permsplit} for a representative example of a KLE coefficient splitting.
\begin{figure}
\centering
   \includegraphics[width=0.85\textwidth]{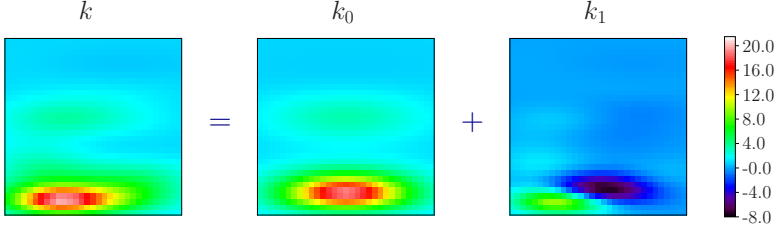}
   \caption{KLE coefficient decomposition posed on a $30 \times 30$ mesh; $n=20$, $m = 5$}
   \label{permsplit}
\end{figure}
As the initial analysis is built in a deterministic setting, we use the same, fixed $\theta_i$ $(i=1, \ldots, n)$ in \eqref{split-random-k} for all related examples. To begin, we recall the error estimate
\begin{equation} \label{errest}
|||\xi-\xi_J|||_K
 \leq 2  \|\frac{k_1}{\sqrt{kk_0}}\|_{L^{\infty}(K)} \eta_K^{J+1} \|\sqrt{k_0} \nabla l\|_{0, K}
\end{equation}
and set $\mathcal{B}^\xi := 2  \|\frac{k_1}{\sqrt{kk_0}}\|_{L^{\infty}(K)} \eta_K^{J+1} \|\sqrt{k_0} \nabla l\|_{0, K}$ from the last lines in the proof of Lemma.~\ref{singlethm}. In particular, we recall that when $\eta_K=\|{k_1\over k_0}\|_{L^{\infty}(K)} < 1$, convergence of the basis function sequence $\{ \phi_J \}_{J=0}^{\infty}$ is expected from the analysis. To test the error bound in Eq.~\eqref{errest} we consider a variety of field splitting configurations. In Fig.~\ref{boundplot} we illustrate two cases of splitting where $\eta_K=\|{k_1\over k_0}\|_{L^{\infty}(K)} < 1$. These examples result from the cases where $m=15$ and $m=17$ terms are used in the KLE splitting. First, it is important to note that the bounds presented in the analysis are clearly represented in the figure. In particular, for either case we see that the energy norm of the error is always bounded above by the theoretical estimate provided in Eq.~\eqref{errest}. We also note that as $J$ (the number of terms in the approximate basis function sequence)  increases the error and associated bounds rapidly decrease. This behavior is expected as the term $\eta_K^{J+1}$ quickly decreases as $J$ increases. We also point out that a smaller value of $\eta_K$ yields a tighter bound.
\begin{figure}
\centering
   \includegraphics[width=0.9\textwidth]{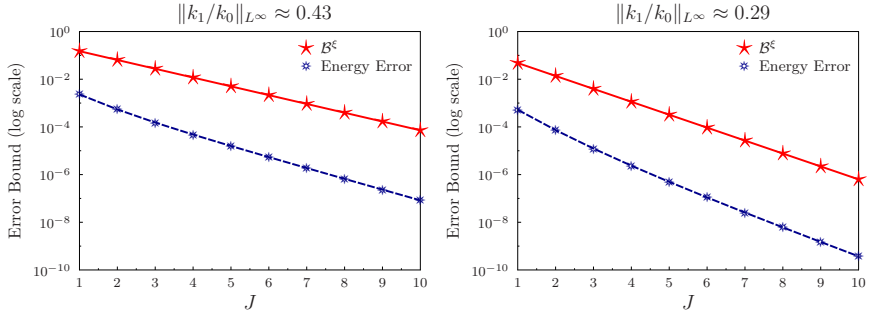}
   \caption{Energy error and error bound computations for the KLE coefficient; $m=15$ terms (left), $m=17$ terms (right) }
   \label{boundplot}
\end{figure}
To conclude this coefficient example, we offer an illustration of the actual basis functions which are obtained through the proposed computational method. Fig.~\ref{phixiplot} includes the benchmark basis function $\phi$ as well as $\phi_J$ $(J=0,1,2)$ from the sequence $\{ \phi_J \}$. We also plot the benchmark perturbation $\phi - l$ and $\phi_J - l$ for $J=0,1,2$. All plots were obtained from the case where $m=5$ terms were used in the KLE splitting. We note that for a relatively pronounced splitting, we see a noticeable convergence trend.
\begin{figure}
\centering
   \includegraphics[width=1.0\textwidth]{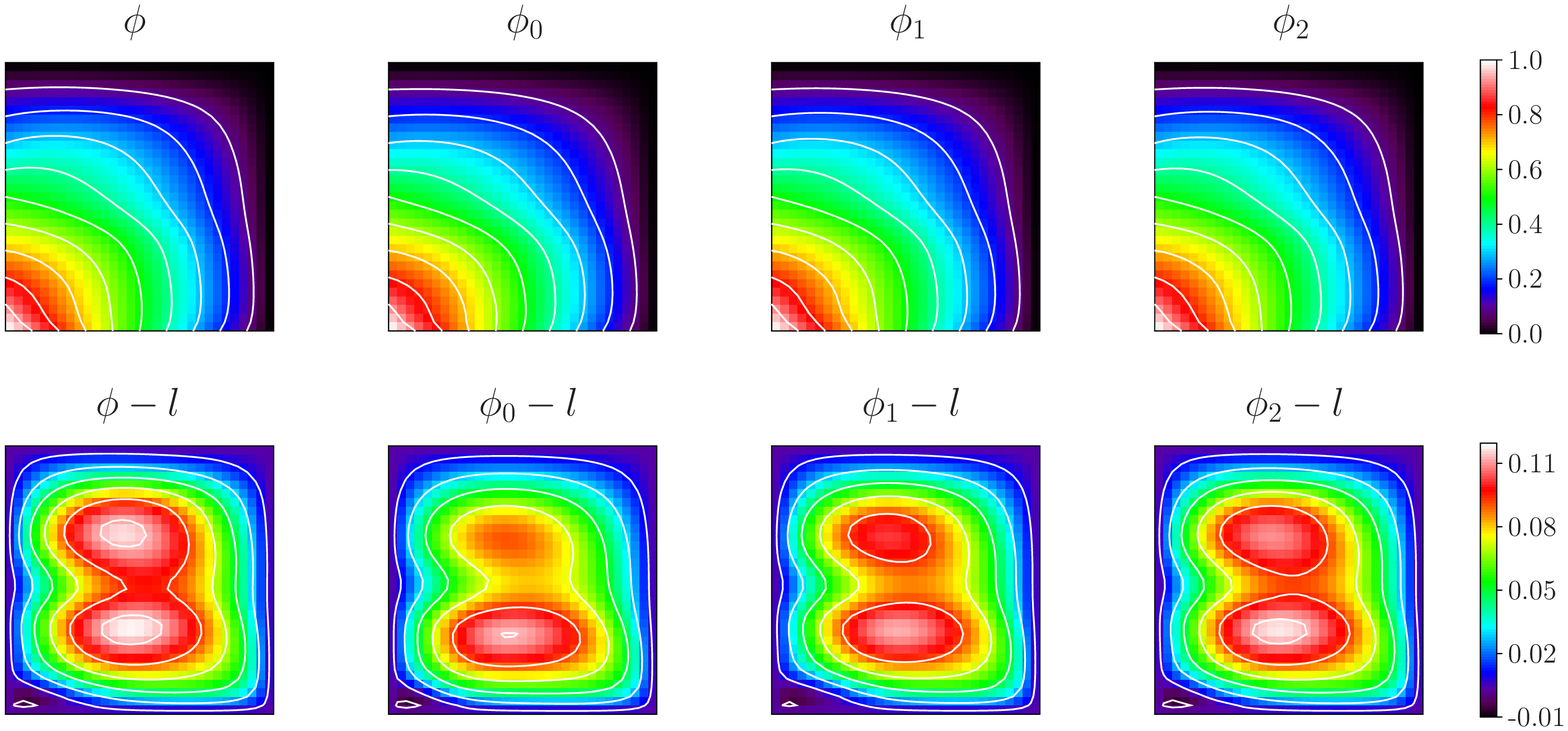}
   \caption{Convergence illustration for a basis function $\phi$ and the corresponding perturbation $\phi - l$ for the KLE coefficient; $m=5$ }
   \label{phixiplot}
\end{figure}

To generate a second test coefficient on an arbitrary coarse element $K$ we assume that the full coefficient $k$ follows a log-normal distribution. That is, we assume that $\text{ln}\left( k(x) \right) = Y(x, \omega)$, where $Y(x, \omega)$ is a normal random variable with zero mean and variance one. We also assume that $\text{ln}\left( k_0(x) \right) = s_c Y(x, \omega)$, where $s_c$ is the strength factor which will determine the final splitting $k = k_0 + k_1$. In particular, we set

\begin{equation} \label{normalsplit}
k_1 = k - k_0 = \text{exp}(Y) - \text{exp}(s_c Y)
\end{equation}
to create the coefficient decomposition. For the following examples, we choose various values of $s_c$ within the interval $0.4 < s_c < 0.96$. See Fig.~\ref{permsplit_norm} for an example of the coefficient splitting in Eq.~\eqref{normalsplit} for the case when $s_c = 0.4$. We note that the $k_0$ portion of the decomposition is much less heterogeneous than $k_1$.

\begin{figure}
\centering
   \includegraphics[width=0.85\textwidth]{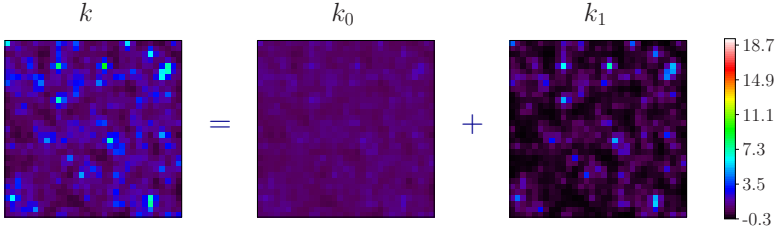}
   \caption{Log-normal coefficient decomposition posed on a $30 \times 30$ mesh; $s_c$ = 0.4}
   \label{permsplit_norm}
\end{figure}

To validate the convergence properties outlined in Thm.~\ref{singlethm}, we consider two approaches and recall the estimate from Eq.~\eqref{errest}. In Fig.~\ref{boundplot_normal} we illustrate two representative plots of splitting where $\eta_K=\|{k_1\over k_0}\|_{L^{\infty}(K)} < 1$. These examples result from the cases where strength factors of $s_c = 0.90$ and $s_c = 0.94$ are used in the log-normal coefficient splitting. As before, we note that the bounds presented in the analysis are clearly represented in the figure. For either case we see that the energy norm of the error is always bounded above by the theoretical estimate provided, and that as $J$ increases, the error and associated bounds decrease rapidly. Here, we are also interested in the rate of convergence offered in Eq.~\eqref{errest}. To address this, we fix a value of $J$ and plot $\eta_K$ vs. $|||\phi-\phi_J|||_K$ on the log scale. Fig~\ref{orderplot_normal} illustrates the log plots as well as the slopes obtained from a linear trend line. In this case we obtain slopes which are close to the value of $J+2$. In particular, for $J=0$ we obtain a slope of $1.8$, for $J=2$ we obtain a slope of $3.7$ and for $J=4$ we obtain a slope of $5.6$. These results are consistent with the convergence rate results from Prop.~\ref{prop3.2}. In particular, we see from the plots in Fig.~\ref{orderplot_normal} that exponents of $J + 2 - \delta$ are recovered for all values of $J$.  Because the estimate in Prop.~\ref{prop3.2} depends on a constant (maybe not sharp),
 there exist a slight difference $\delta$ of the convergence rate in the numerical results compared to the estimate in in Prop.~\ref{prop3.2}.

\begin{figure}
\centering
   \includegraphics[width=0.9\textwidth]{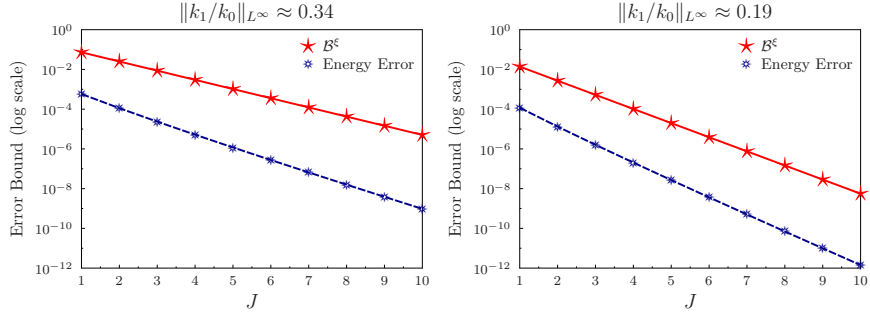}
   \caption{Energy error and error bound computations for the log-normal coefficient; $s_c = 0.90$ (left), $s_c = 0.94$ (right) }
   \label{boundplot_normal}
\end{figure}
\begin{figure}
\centering
   \includegraphics[width=1.0\textwidth]{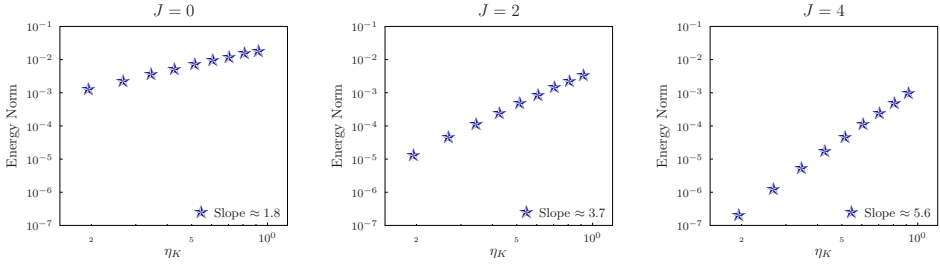}
   \caption{Log plot of $\eta_K$ vs. $|||\phi-\phi_J|||_K$ for the log-normal coefficient; $J = 0$ (left), $J = 2$ (center), $J = 4$ (right) }
   \label{orderplot_normal}
\end{figure}
To conclude these coefficient examples, we offer a representative illustration of the actual basis functions which are obtained through the proposed computational method. Fig.~\ref{phixiplot_normal} includes the benchmark basis function $\phi$ as well as $\phi_J$ $(J=0,4,8)$ from the sequence $\{ \phi_J \}$. We also plot the benchmark perturbation values $\phi - l$ and $\phi_J - l$ for $J=0,4,8$. All plots were obtained from the case where a strength factor of $s_c = 0.4$ is used in the coefficient splitting. We note that this is a rather extreme splitting where $k_0$ is much less heterogenous compared to $k_1$ (refer back to Fig.~\ref{permsplit_norm}) for the original log-normal coefficient configuration. The figure illustrates that successive approximations offer an accurate counterpart to the benchmark basis and perturbation values.

\begin{figure}
\centering
   \includegraphics[width=1.0\textwidth]{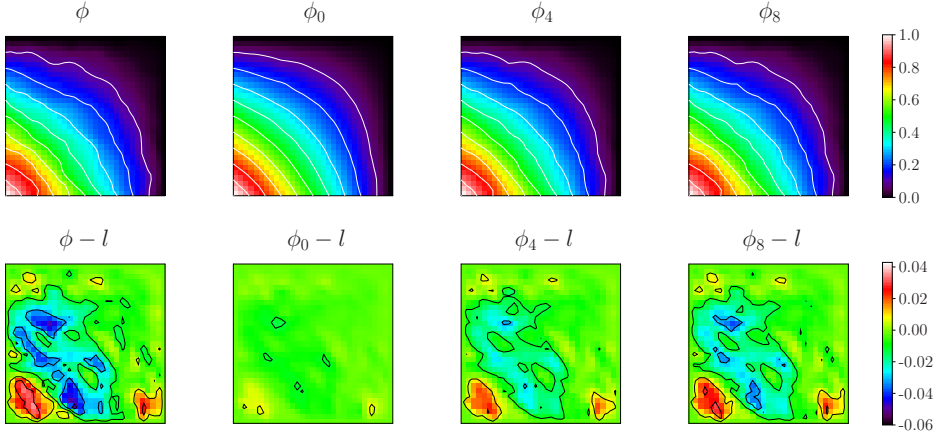}
   \caption{Convergence illustration for a basis function $\phi$ and the corresponding perturbation $\phi - l$ for the log-normal coefficient; $s_c=0.4$ }
   \label{phixiplot_normal}
\end{figure}

 \subsection{Deterministic elliptic solution}
 In this subsection we assess the convergence of the respective solutions of Eq.~\eqref{model-eq}. In particular, we are interested in comparisons between the standard MsFEM solution $u_h \in V_h$, and the proposed MsFEM solution $u_{J,h} \in V_{J,h}$. We first recall that Thm.~\ref{det-thm} suggests convergence of $u_{J,h}$ to $u_{h}$ as $J \rightarrow \infty$. In order to verify this theoretical result we test two separate permeability configurations. In particular, we use a KL expansion (with a set of fixed random parameters), and a single realization of a log-normal field analogous to those in Subsection \ref{detbasis}. However, the fine scale fields are now posed on at least a $120 \times 120$ fine mesh. See Fig.~\ref{logperm_fine} for a log-plot of each respective permeabililty field. Until otherwise noted, all MsFEM solutions in this subsection are obtained by using a $12 \times 12$ coarse mesh.

 \begin{figure}
\centering
   \includegraphics[width=0.6\textwidth]{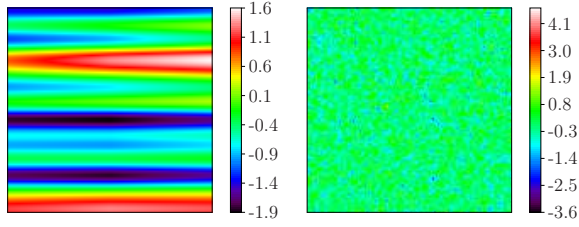}
   \caption{Fine scale KLE (left) and log-normal (right) coefficients posed on a $120 \times 120$ mesh (log scale) }
   \label{logperm_fine}
\end{figure}
First, we consider a KLE coefficient which is posed on $120 \times 120$, $240 \times 240$, and $360 \times 360$ fine meshes. For the variance we use $\sigma^2 =  2.25$ and for the correlation lengths we use $l_x = 0.7$ and $l_y = 0.04$, respectively (see Eq.~\eqref{corrfunct}). For all examples we truncate the original KL expansion at $n=20$ terms to obtain the full coefficient $k = k_0 + k_1$. In order to test the convergence properties in Thm.~\ref{det-thm} we are interested in computing a variety of proposed MsFEM solutions $u_{J,h}$ (for $J=1,\ldots$). Then, we compute the energy norm $||| u_h - u_{J,h} |||$ and the associated bound $\mathcal{B}^u := (\tilde{c})^{1/2} \big(\eta^{J+1}+\frac{\eta^{J+1}}{1-\eta^{J+1}}\big )^{1/2} |||u|||$. We note that for $||| u |||$, we compute the energy norm of the standard FEM elliptic solution. Fig.~\ref{fineboundplot_kl} illustrates the energy norm values and bounds for increasing $J$ values, and two different KLE splitting configurations ($m=16$ and $m=18$). We note that, as before, a smaller $\eta$ value yields a tighter bound. In addition, we see the pronounced convergence of the proposed MsFEM solution. Leaving the coarse mesh intact, Fig.~\ref{fineboundplot_kl} also serves to illustrate the effect that different local meshes have on the construction of the operators $M_0$ and $M_1$ from Thm.~\ref{thm-v-M}. In essence, the fine meshes give three successive local mesh refinements (from $10 \times 10$ to $20 \times 20$ to $30 \times 30$) on which the local operators will be constructed. As shown in the figure, there is a negligible difference in the respective errors, thus illustrating that the fine scale on which the operators are computed does not have a significant effect on the error resulting from the proposed method. Furthermore, the results show that the original $120 \times 120$ mesh is sufficiently fine to resolve the necessary scales in the KL expansion. To further illustrate the convergence of the proposed MsFEM approach, we offer various elliptic solution plots in Fig.~\ref{ellipcompare_kl}. In addition to the solution plots we offer the relative errors $|||u_h - u_{J,h}||| / ||| u_h |||$. As $J$ increases, we see that the relative error steadily decreases from $3.6 \%$ to $0.8 \%$.

\begin{figure}
\centering
   \includegraphics[width=0.9\textwidth]{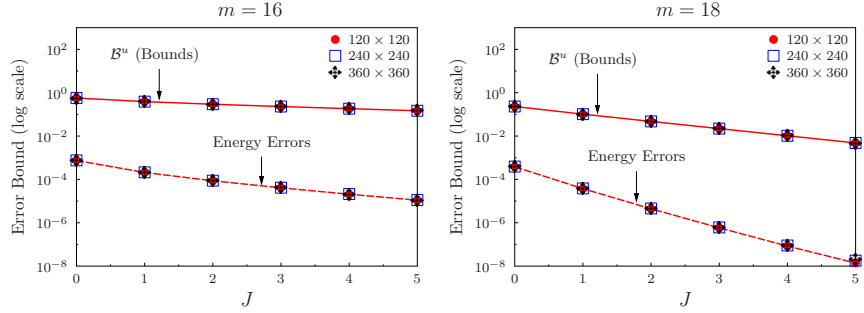}
   \caption{Energy error and error bound computations for the fine scale KLE coefficient; $m=16$ terms (left), $m=18$ terms (right); $120 \times 120$, $240 \times 240$, and $360 \times 360$ fine
                  meshes}
   \label{fineboundplot_kl}
\end{figure}
\begin{figure}
\centering
   \includegraphics[width=1.0\textwidth]{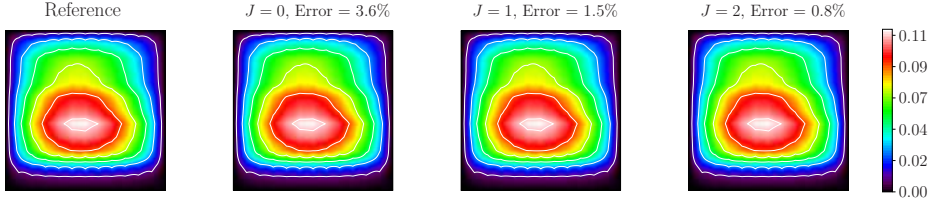}
   \caption{Convergence illustration for the reference MsFEM elliptic solution $u_h$ (labeled `Reference'), and the corresponding new MsFEM elliptic solutions $u_{J,h}$ for
                 $J=0,1,2$; KLE coefficient, $m=12$ }
   \label{ellipcompare_kl}
\end{figure}

We also consider the log-normal coefficient to test the convergence properties of the proposed approach. The energy norm and bound plots may be seen in Fig.~\ref{fineboundplot_normal}, where strength values of $s_c = 0.88$ and $s_c = 0.9$ are used. As expected, we see that the solutions converge and that a smaller $\eta$ value yields a tighter bound. For this case, we also offer an illustration of various solution plots along with the relative errors values. See Fig.~\ref{ellipcompare_normal} for the convergence illustration. We note that as $J$, the relative error decreases from $1.0 \%$ to $0.2 \%$. In this case we see that the initial error is less than its counterpart from the KLE coefficient. This is not unexpected since the KLE expansion represents a stronger form of heterogeneity (refer back to Fig.~\ref{logperm_fine}).

\begin{figure}
\centering
   \includegraphics[width=0.9\textwidth]{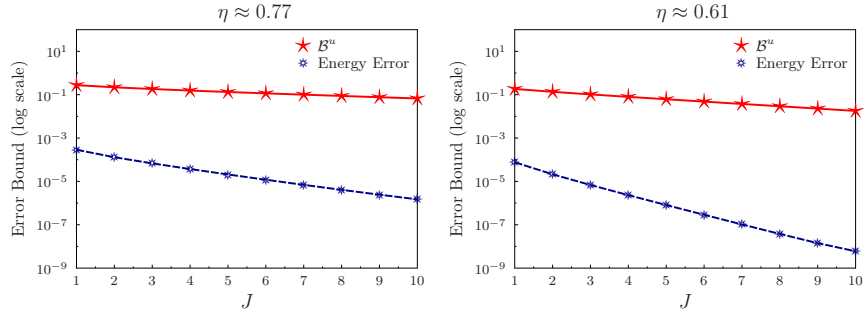}
   \caption{Energy norm and error bound computations for the fine scale log-normal coefficient; $s_c=0.88$ (left), $s_c=0.9$ (right)}
   \label{fineboundplot_normal}
\end{figure}
\begin{figure}
\centering
   \includegraphics[width=1.0\textwidth]{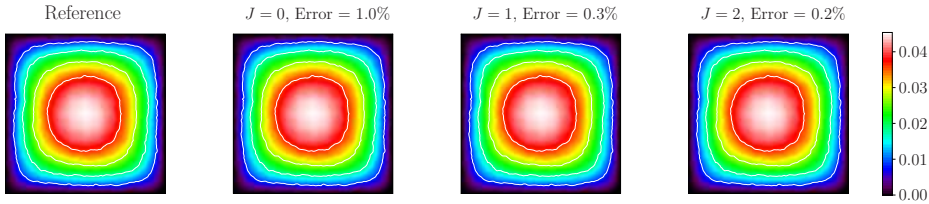}
   \caption{Convergence illustration for the reference MsFEM elliptic solution $u_h$ (labeled `Reference'), and the corresponding new MsFEM elliptic solutions $u_{J,h}$ for
                 $J=0,1,2$; log-normal coefficient, $s_c = 0.86$ }
   \label{ellipcompare_normal}
\end{figure}

In addition to the results presented above, we also consider a variety of coarse mesh configurations for comparison. To recall, all previous examples in this subsection were obtained from a fixed $12 \times 12$ coarse mesh. However, it is also fitting to illustrate the effects of different mesh configurations on the solution error. In particular, Fig.~\ref{errorplot_varycoarse} contains the errors quantities $||| u - u_{J,h} |||$ (left) and $||| u_h - u_{J,h} |||$ (right) for a variety of mesh configurations and $J$ values. The errors are obtained from the same KLE coefficient data with $m = 16$. We note that the left set of errors (the errors between the standard FEM solution and the proposed MsFEM solution) are essentially constant regardless of the value of $J$. In other words, the dominant source of error is clearly from the multiscale solution method, and the error between standard MsFEM and the proposed method is negligible. In addition, we see that a refinement of the coarse mesh yields a smaller error, which is to be expected from a multiscale solution technique. Of particular interest are the right set of errors $||| u_h - u_{J,h} |||$, which are computed using the same coarse mesh configurations. Most importantly, we see that a successive refinement of the coarse mesh does not significantly affect the error between standard MsFEM and the proposed method. We note that a refined coarse mesh does lead to a slight decrease in the error, however, it is clear from Fig.~\ref{errorplot_varycoarse} that the proposed method is not sensitive with respect to the coarse mesh configuration. In particular, it is evident that $J$ is the parameter which dictates the respective errors.

\begin{figure}
\centering
   \includegraphics[width=0.9\textwidth]{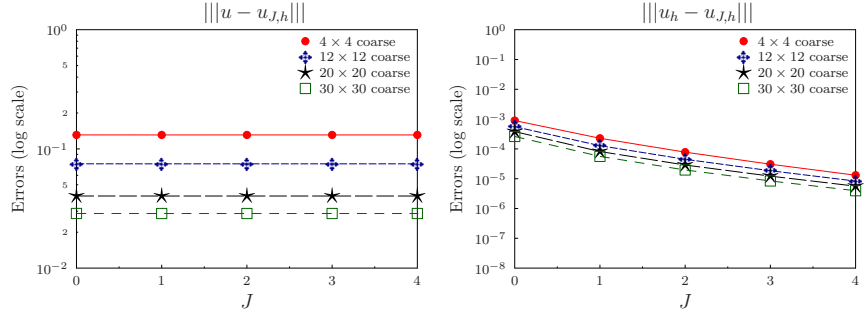}
   \caption{Energy error computations for the fine scale KLE coefficient; $m=16$; $4\times4, 12\times12, 20\times20$, and $30\times30$ coarse meshes}
   \label{errorplot_varycoarse}
\end{figure}

 \subsection{Stochastic elliptic solution using the Monte Carlo method} \label{stochresults}
In this subsection we address the stochastic problem described in Sect.~\ref{stochanalysis}. To begin, we are first interested in testing the stochastic bounds which are proved in Thm.~\ref{stochbound}. We remark that this stochastic result is analogous to the deterministic bounds offered in Thm.~\ref{det-thm}. In order to compute the left hand side of the inequality in Thm.~\ref{stochbound} we use the Monte Carlo approximation

\begin{equation}
||\sqrt{k}\nabla(u_h-u_{J, h})||_{L^2(D)\otimes L^2(\Omega)} \approx \sum_{s=1}^{N}  ||\sqrt{k}\nabla(u_h^s-u_{J, h}^s)||_{L^2(D)} / N = \sum_{s=1}^{N}  ||| u_h^s-u_{J, h}^s||| / N,
\end{equation}
where the index $s$ denotes a fixed sample value. In particular, we generate a stochastic field, compute the corresponding energy norms for $s=1, \ldots, N$, and obtain an average over the samples. For the right hand side of the inequality $\widetilde{\mathcal{B}}^u := (\tilde{c})^{1/2} \big(\eta^{J+1}+\frac{\eta^{J+1}}{1-\eta^{J+1}}\big )^{1/2} ||\sqrt{k}\nabla u||_{L^2(D)\otimes L^2(\Omega)}$, we use the same type of Monte Carlo approximation for the stochastic integrals and we use the fully resolved FEM solution for $u$. In order to verify the bounds, we focus on a stochastic field which is generated from a truncated KL expansion. For the results in this subsection we employ correlation lengths of $l_x = l_y = 0.1$ and a variance of $\sigma^2 = 1.0$ for the field construction. The series expansion is truncated at $n=20$ terms, and the coefficient is posed on a $64 \times 64$ fine mesh. We assume that the random coefficients $\theta_i$ ($i=1, \ldots, 20$) represent a $20$ dimensional vector in the hypercube $[-1,1]^{20}$. In other words, for sampling we draw $20$ i.i.d. uniform random variables from the interval $[-1,1]$. See Fig.~\ref{permsample} for a typical coefficient sample in this context. For all stochastic computations $N = 200$ samples are used, and for all MsFEM computations we use a $16 \times 16$ coarse mesh.

\begin{figure}
\centering
   \includegraphics[width=0.4\textwidth]{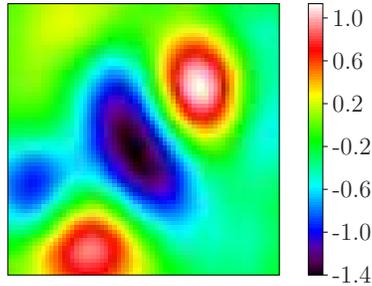}
   \caption{Typical elliptic coefficient sample from the KL expansion posed on a $64 \times 64$ mesh (log scale) }
   \label{permsample}
\end{figure}
To verify the bounds in Thm.~\ref{stochbound} we offer two representative plots in Fig.~\ref{stochbounds}. The left hand side was obtained by keeping $m=14$ terms in the original $20$ term expansion, and the right hand side was obtained by keeping $m = 18$ terms in the original expansion. We note that more terms in the KL expansion (resulting in a smaller average $\eta$ value) yields a tighter bound, and that the expected energy norms deplete rapidly. These results are consistent with those obtained from the deterministic fields.

\begin{figure}
\centering
   \includegraphics[width=0.9\textwidth]{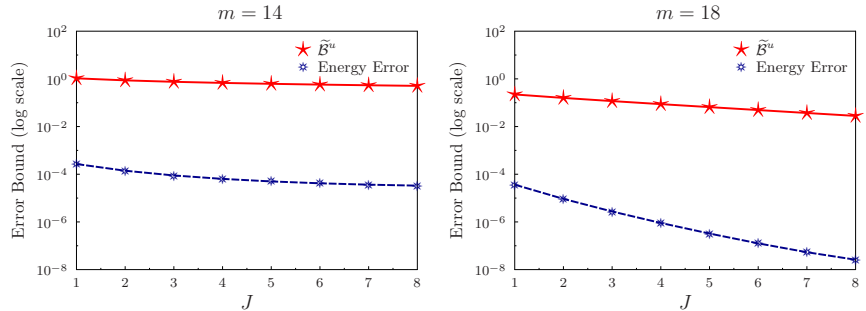}
   \caption{Expected energy norm and error bound computations for the KLE coefficient; $m = 14$ terms (left), $m = 18$ terms (right) }
   \label{stochbounds}
\end{figure}
To further illustrate the behavior of the stochastic problem we recall the energy ratio $E(m)=\frac{\sum_{i=1}^m \sqrt{\lambda_i}}{\sum_i^n \sqrt{\lambda_i}}$, where $\lambda_i$ are the eigenvalues from the KL expansion. We note that as $m$ increases this value is expected to quickly increase (in other words, $1-E(m)$ will quickly decrease). As more terms in the KL expansion typically yield smaller errors from the series approximation, we expect consistent behavior between $1-E(m)$ vs. the mean and variance quantities of the energy norm of the error, $||| u_h - u_{J,h} |||$. Fig.~\ref{statplot} illustrates the relationship between the statistics of the energy norm quantities and the energy ratio. For both plots we use $J=2$ for the basis function approxmations. For both the energy norm mean and variance comparisons, we note that as $1 - E(m)$ increases, the respective statistical values also increase. In particular, we see that less terms in the KL expansion yield errors that grow algebraically with respect to a decreasing energy ratio.

\begin{figure}
\centering
   \includegraphics[width=0.9\textwidth]{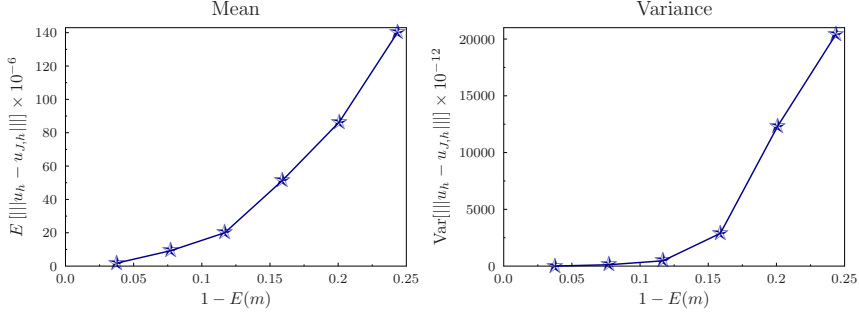}
   \caption{ $1-E(m)$ vs. the mean of $||| \cdot |||$ (left), and $1-E(m)$ vs. the variance of $||| \cdot |||$ (right); $J = 2$, Monte Carlo }
   \label{statplot}
\end{figure}
To finish this subsection we offer statistical comparisons obtained from the reference (standard) MsFEM solution $u_h$ and the proposed  MsFEM solution $u_{J,h}$. See Fig.~\ref{statcompare} for a comparison between the mean and variance of the respective elliptic solutions. We note that any differences are nearly undetectable, further verifying the accuracy of the proposed method.

\begin{figure}
\centering
   \hspace*{-0.8cm} \includegraphics[width=0.8\textwidth]{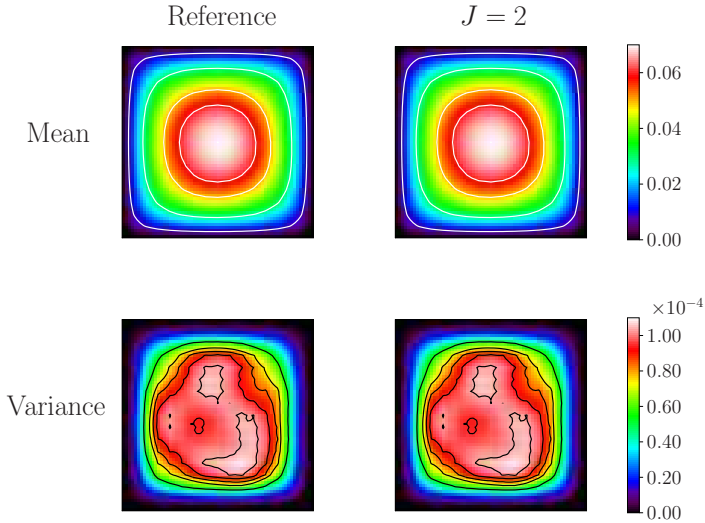}
   \caption{ Statistical comparisons between standard MsFEM and the proposed MsFEM approach }
   \label{statcompare}
\end{figure}

 \subsection{Stochastic elliptic solution using the parameter reduction collocation}
In this subsection we address the alternative parameter reduction collocation approach as described in Subsection~\ref{paramreduc}. In particular, we are interested in testing the accuracy of Monte Carlo sampling with standard MsFEM, versus random parameter reduction collocation with the proposed  MsFEM approach using the Green's kernel. Throughout this section we recall that the total error can be decomposed into two main components. Namely, the error can be decomposed into the splitting error $e_{spl}$, and the collocation error $e_{col} = e_{col}(J,m,L)$. In order to assess the significance of the error contributions, we thoroughly test a variety of scenarios resulting from values of $J$ (number of terms in the series expansion), $m$ (splitting configuration), and $L$ (collocation level). In a parallel setting the computational cost is decreased when the parameter reduction approach is implemented, and demonstrated accuracy of the technique would solidify it as a suitable sampling alternative. Throughout this subsection we use the same KLE configurations as in Subsection~\ref{stochresults}.

To motivate further disussion, we first offer Table~\ref{error} which compares the Monte Carlo sampling approach with the new parameter reduction sampling approach. The values in the table result from keeping $m = 16$ terms for construction of $k_0$ combined with interpolation levels $L=1$, $L=2$, and $L=3$. In particular, we tabulate the values $\frac{ ||| u_h - \tilde{u}_{J,h} ||| }{ ||| u_h ||| } \times 100 \%$ for $5$ fixed $\Theta$ (random parameter) values, where $u_h$ denotes the standard MsFEM solution with Monte Carlo sampling, and $\tilde{u}_{J,h}$ denotes the proposed MsFEM solution with the parameter reduction approach. As we can see from Table~\ref{error}, the parameter reduction approach closely recovers each individual sample of the elliptic solution, and a higher interpolation level leads to smaller errors. In all cases we note that the relative errors do not exceed $1 \%$.

\begin{table}[H]
\begin{center}
\caption{Comparison of elliptic solution difference}
\label{error}
\begin{tabular} [htbp] {|c|c|c|c|}  \hline
\multirow{2}{4.7em}{\hspace*{0.cm}{$\Theta$ Sample}} &
\multicolumn{3}{c|}{Relative Errors $(\%)$}   \\ \cline{2-4}
       & Level 1 & Level 2  & Level 3 \\ \hline
$1$ & 0.64 & 0.12  & 0.05         \\ \hline
$2$ & 0.30 & 0.11   &  0.10       \\ \hline
$3$ & 0.33 & 0.18    &  0.17      \\ \hline
$4$ & 0.67 & 0.19  &    0.08      \\ \hline
$5$ & 0.25 & 0.13   &   0.12       \\ \hline
\end{tabular}
\end{center}
\end{table}

Although the individual sample results are promising, we are most interested in the statistical behavior of the respective solutions. In Fig.~\ref{collstatplot} we offer analogous results to those found in the previous subsection. Namely, we plot $1-E(m)$ vs. the mean and variance quantities of the relative errors, $||| u_h - \tilde{u}_{J,h} ||| / ||| u_h |||$ and $||| u_h - u_{J,h} ||| / ||| u_h |||$, where we recall that $u_{J,h}$ denotes the standard MsFEM solution technique combined with Monte Carlo sampling. In essence, we use these expected errors as benchmarks for comparison with the proposed method. More specifically, we are interested in comparing standard MsFEM with Monte Carlo sampling against the proposed MsFEM approach with parameter reduction collocation. We first note that an increase in interpolation level clearly yields a decrease in the relative error values. The level 2 interpolation errors nearly match the Monte Carlo results, and this slight discrepancy may be viewed as a trade of the increased efficiency of the proposed method. In other words, for a moderate collocation level it is natural to expect some minimal error contributions from collocation. However, the level 3 interpolation yields errors that are nearly identical to the Monte Carlo results. This is due to the fact that the collocation error is essentially dimished, and the only remaining error is due to the splitting. This will be discussed in further detail below. In Fig.~\ref{meanvarL3} we single out the $L=3$ results and plot them with the Monte Carlo results. We again note that the discrepancies are neglible, and point out the similarities in the statistical behavior which is illustrated in Fig.~\ref{statplot}. In particular, we again encounter an increase of the mean and variance quantities with respect $1-E(m)$ which is solely due to the splitting configuration.

\begin{figure}
\centering
   \includegraphics[width=1.0\textwidth]{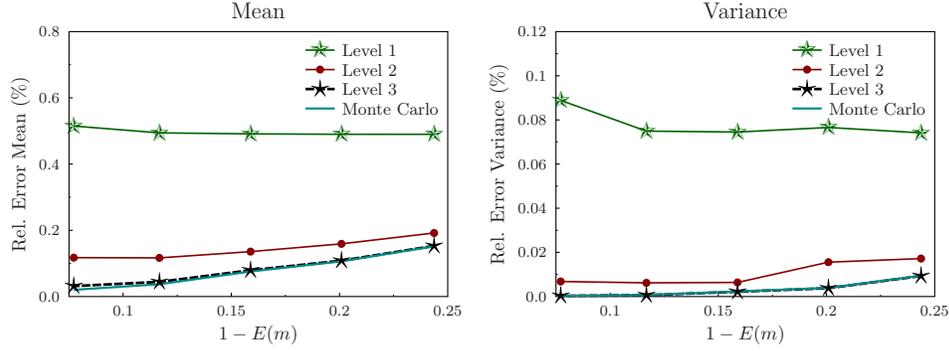}
   \caption{ $1-E(m)$ vs. the expected value (left), and variance (right) of the relative error quantities; $J=1$, $L=1, 2, 3$ and Monte Carlo }
   \label{collstatplot}
\end{figure}

\begin{figure}
\centering
   \includegraphics[width=0.9\textwidth]{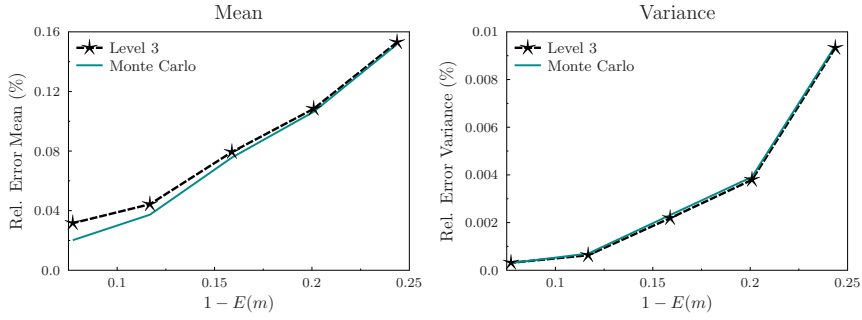}
   \caption{ $1-E(m)$ vs. the expected value (left), and variance (right) of the relative error quantities; $J=1$, $L=3$ and Monte Carlo }
   \label{meanvarL3}
\end{figure}
In addition to the high level of accuracy we obtain for a larger interpolation level, we emphasize that {\it all} plots from Fig.~\ref{collstatplot} illustrate relative errors that do not exceed $1 \%$. Even though the level 1 results do not closely match the Monte Carlo results, a neglible error of $< 1 \%$ may be completely acceptable for many applications. In particular, the small vertical scale should be duely noted. From the $L=1$ results we conclude that the proposed sampling method is not sensitive with respect to $E(m)$, and fewer terms may be kept in the KL expansion without a significant loss in accuracy. This is due to the fact that $e_{col}$ is dominant for the basic collocation level. These results can be viewed as a potential limitation and advantage of the approach if a level 1 interpolant is used. It may be a potential limitation because the addition of terms does not decrease the expected error significantly. However, these results may also be viewed as an advantage since 14 terms in a 20 term KL expansion (for example) yields similar errors as 18 terms in a 20 terms KL expansion. Thus, fewer terms may be used in the decomposition with a neglible loss in accuracy. However,  as the results show, no sacrifice in accuracy is necessary if a higher interpolation level is used. This would simply amount to more precomputation steps in which additional sparse grid points are considered for the Smolyak interpolant.

We also test the sensitivity of the approach with respect to the number of terms $J$ which are kept in Eq.~\eqref{greeninterp}. All previous results in this subsection were obtained from a value of $J=1$ in the expansions; however, it is fitting to offer a comparison between solutions obtained by keeping more terms in the expansion. For the following comparisons we use the same level 1 interpolation results as seen in Fig.~\ref{collstatplot} and test the results corresponding to $J=1$ and $J=4$. In Fig.~\ref{Jcompare} we note that the method does not exhibit sensitivity with respect to $J$. This is again due to the fact that the collocation error is dominant for a level $L=1$ interpolation. We see that the mean errors may be slightly decreased by keeping more terms in the series expansion, yet this increase in accuracy is subtle. This may be attributed to the fact that the $L=1$ Green's function interpolant $I_m G(\Theta_0)$ in  Eq.~\eqref{greeninterp} does not guarantee faster convergence depending on the number of terms kept in the series expansion. More specifically, the dominant collocation error is inhereted through the iterative procedure. As more terms do not yield a significant gain in accuracy, keeping less terms is preferable in this low interpolation level setting. However, as the splitting error becomes dominant (i.e., a higher interpolation level is considered) using more $J$ terms would, in fact,  result in more pronounced accuracy. We next elaborate on some additional effects of a higher interpolation level.

\begin{figure}
\centering
   \includegraphics[width=0.9\textwidth]{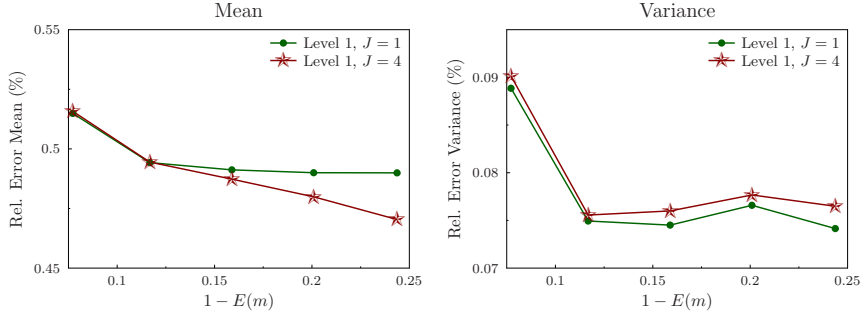}
   \caption{ Comparison between the expected value (left), and variance (right) of the relative error quantities; $L=1$, $J=1, 4$ }
   \label{Jcompare}
\end{figure}

To finish this section we offer detailed relative error comparisons for $||| u_h - \tilde{u}_{J, h} ||| / ||| u_h |||$ (total error), $||| u_h - u_{J, h} ||| / ||| u_h |||$ (splitting error), and $||| \tilde{u}_{J, h} - u_{J, h} ||| / ||| u_h |||$ (collocation error). In doing so, the aim is to solidify the contention that a higher interpolation level indeed diminishes the the collocation error. In turn, if the neglible total error ($< 1 \%$) which results from a lower level interpolation (e.g., $L=1$) is not suitable, a higher level interpolation (e.g., $L=3$) may be used to negate the collocation error. To reiterate, $u_{J, h}$ denotes a solution obtained from the proposed MsFEM approach with Monte Carlo sampling, and $\tilde{u}_{J, h}$ denotes a solution obtained from the proposed MsFEM approach with parameter reduction. See Fig.~\ref{methodcompare} for an illustration of the various errors. We introduce a slight abuse of the original notation from Subsection~\ref{paramreduc}, and use $e$ to denote the total {\it relative} error, $e_{spl}$ to denote the {\it relative} splitting error, and $e_{col}$ to denote the {\it relative} collocation error in Fig.~\ref{methodcompare}. In the left hand side of Fig.~\ref{methodcompare} we note that the smallest errors result from the proposed MsFEM combined with Monte Carlo sampling (i.e., the splitting error is small). In addition, the total error and collocation error are comparable. Thus, we conclude that the collocation approach yields the dominant soure of error for a low level interpolant. However, we note the significant difference in the right hand side of Fig.~\ref{methodcompare}. In this case we note two important factors. First, the total error from a $L=3$ interpolant is smaller than its $L=1$ counterpart (as expected). Furthermore, we see that the splitting error is now the dominant source of error, and the collocation error is much smaller. In other words, an increase in the interpolation level accomplishes the task of significantly reducing the collocation error. Thus, we conclude that for a higher interpolation level the parameter reduction approach behaves much like the standard Monte Carlo counterpart due to the minimal effect of collocation error.

\begin{figure}
\centering
   \includegraphics[width=1.0\textwidth]{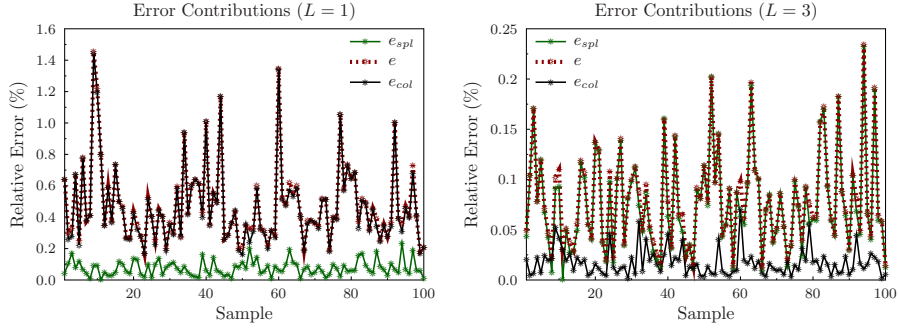}
   \caption{ Comparison between the relative error quantities resulting from respective methods; $m=16$, $J=1$, $L=1$ (left), $L=3$ (right) }
   \label{methodcompare}
\end{figure}

\section{Conclusions}
In this paper we present a new MsFEM
 approach for solving  elliptic equations with coefficients that vary on many length scales and contain
 uncertainties. Through considering a coefficient decomposition combined with a  Green's function approach, we are able to construct new MsFEM
  solutions which closely recover traditional MsFEM solutions. In a deterministic setting, rigorous error estimates and bounds are first presented for a representative basis function within the MsFEM approximation space. Using the initial basis function results, we offer a rigorous error analysis describing the behavior of the elliptic solutions that are sought in the space that is spanned by the new multiscale basis functions. Under appropriate assumptions on the coefficient splitting configuration, we are ultimately able to construct approximate solutions which nicely converge to a benchmark solution. The basis function and elliptic solution analysis are thoroughly verified through a number of representative numerical examples. In a stochastic setting, the proposed solution method is shown to reduce the number of sample solutions that must be constructed for assessing the statistical behavior of the system. In particular, the splitting gives rise to a situation where the random parameter dimension can be reduced, and where stochastic collocation becomes an efficient alternative to direct Monte Carlo sampling. As the parameter reduction sampling approach involves a number of pre-computation steps that are completely independent, this approach is especially desirable in a parallel setting. Analogous error bounds are derived for the stochastic problem, and the successful performance of the proposed method is verified through a variety of numerical examples.

\section*{Acknowledgments}
We would like to formally thank Professor V. Ginting for helpful discussions on the theory and implementation of the proposed method.
His expertise in multiscale methods made the current work more manageable. L. Jiang also thanks  Professor Y. Efendiev and
Dr. J. David Moulton for useful discussions and comments. We thank the reviewers for their comments to improve the paper.

\appendix
\section{proof of Lemma \ref{lem-xi-j}}
\label{app1}
Let $\tx_0$ solve equation (\ref{eq-xi0}). Then integration by parts gives
\[
(k_0\nabla \tx_0, \nabla \tx_0)=(k_1\nabla(\Pi-I)l, \nabla \tx_0)=({k_1\over k_0} k_0\nabla(\Pi-I)l, \nabla \tx_0).
\]
This implies that
\begin{equation}
\label{ineq-xi0}
\|\sqrt{k_0} \nabla \tx_0\|_{0,K} \leq \eta_K \|\sqrt{k_0} \nabla(\Pi-I)l\|_{0,K} \leq 2 \eta_K \|\sqrt{k_0} \nabla l\|_{0,K}.
\end{equation}
Let $\tx_j$ ($j=1,2,\cdots$) solve Eq.~(\ref{eq-xi-j}). Then a similar argument shows
\begin{equation}
\label{ineq-xi-j}
\|\sqrt{k_0} \nabla \tx_j\|_{0,K} \leq \eta_K \|\sqrt{k_0} \nabla \tx_{j-1}\|_{0,K}.
\end{equation}
Using (\ref{ineq-xi-j}) recursively and (\ref{ineq-xi0}), we have
\[
\|\sqrt{k_0} \nabla \tx_j\|_{0,K} \leq \eta_K^j \|\sqrt{k_0} \nabla \tx_0\|_{0,K}\leq 2 \eta_K^{j+1} \|\sqrt{k_0} \nabla l\|_{0,K},    \quad j=0,1,2, \cdots.
\]
The proof is completed.

\section{proof of Lemma \ref{singlethm}}
\label{app2}
Since  $\phi_J=(I-\Pi)l+\xi_J$ and $\phi=(I-\Pi)l+\xi$, it suffices to show that
\[
\lim_{J\rightarrow \infty}|||\xi_J-\xi|||_{K} =0.
\]
By adding Eq.~(\ref{eq-xi0}) and the sequence of equations in $(\ref{eq-xi-j})$, it follows that
\begin{eqnarray}
\label{thm1-eq1}
\begin{cases}
\begin{split}
-\nabla \cdot \left( k_0\nabla(\sum_{j=0}^J\tx_j) \right)&=\nabla \cdot \left( k_1\nabla (\sum_{j=0}^{J-1}\tx_j) \right)-\nabla \cdot (k_1\nabla (\Pi-I)l)  \quad \text{in $K$}\\
\sum_{j=0}^J\tx_j&=0   \quad \text{on $\pK$}.
\end{split}
\end{cases}
\end{eqnarray}
Because $\sum_{j=0}^{J-1}\tx_j=\xi_J-\tx_J$, Eq.~(\ref{thm1-eq1}) is simplified to
\begin{eqnarray}
\label{thm1-eq2}
\begin{cases}
\begin{split}
-\nabla \cdot (k \nabla \xi_J) &=-\nabla \cdot (k_1\nabla \tx_J)-\nabla \cdot (k_1\nabla (\Pi-I)l)  \quad \text{in $K$}\\
\xi_J &=0   \quad \text{on $\pK$}.
\end{split}
\end{cases}
\end{eqnarray}
Applying integration by parts to (\ref{thm1-eq2}),  we have
\begin{eqnarray*}
\begin{split}
(k\nabla \xi_J, \nabla \xi_J)&=(k_1\nabla \tx_J, \nabla \xi_J) +(k_1\nabla (\Pi-I)l, \nabla \xi_J)\\
&= \left( \frac{k_1}{\sqrt{kk_0}} \sqrt{k_0}\nabla \tx_J, \sqrt{k} \nabla \xi_J \right)+ \left( \frac{k_1}{\sqrt{kk_0}} \sqrt{k_0} \nabla (\Pi-I)l, \sqrt{k} \nabla \xi_J \right).
\end{split}
\end{eqnarray*}
As a consequence, it follows immediately that
\begin{eqnarray*}
\begin{split}
|||\xi_J|||_K &\leq \| \frac{k_1}{\sqrt{kk_0}} \|_{L^{\infty}(K)} \|\sqrt{k_0} \nabla \tx_J\|_{0, K} +\|\frac{k_1}{\sqrt{kk_0}}\|_{L^{\infty}(K)} \|\sqrt{k_0}\nabla (\Pi-I)l\|_{0,K}\\
& \leq 2\|\frac{k_1}{\sqrt{kk_0}}\|_{L^{\infty}(K)} (\eta_K^{J+1}+1) \|\sqrt{k_0}\nabla l\|_{0,K}\\
&\longrightarrow 2\|\frac{k_1}{\sqrt{kk_0}}\|_{L^{\infty}(K)} \|\sqrt{k_0}\nabla l\|_{0,K}  \quad \text{as $J \rightarrow \infty$},
\end{split}
\end{eqnarray*}
where we have used Lemma \ref{lem-xi-j} in the last step. Hence,  $\xi_J$ is convergent, and $\phi_J$ is convergent as well.
Subtracting Eq.~(\ref{thm1-eq2}) from Eq.~(\ref{eq-xi}), we have
 \begin{eqnarray}
\label{thm1-eq3}
\begin{cases}
\begin{split}
-\nabla \cdot (k \nabla (\xi-\xi_J)) &=\nabla \cdot (k_1\nabla \tx_J)  \quad \text{in $K$}\\
\xi- \xi_J &=0   \quad \text{on $\pK$}.
\end{split}
\end{cases}
\end{eqnarray}
Performing integration by parts and using the Cauchy-Schwarz inequality for Eq. (\ref{thm1-eq3}), then we obtain
\begin{eqnarray*}
\begin{split}
|||\xi-\xi_J|||_K &\leq \|\frac{k_1}{\sqrt{kk_0}}\|_{L^{\infty}(K)} \|\sqrt{k_0} \nabla \tx_J\|_{0, K}\\
&\leq 2  \|\frac{k_1}{\sqrt{kk_0}}\|_{L^{\infty}(K)} \eta_K^{J+1} \|\sqrt{k_0} \nabla l\|_{0, K}\\
& \longrightarrow 0 \quad \text{as $J\rightarrow 0$}.
\end{split}
\end{eqnarray*}
This completes the proof.

\section{proof of Proposition  \ref{prop3.2}}
\label{app3}
Because  $\phi_J=(I-\Pi)l+\xi_J$ and $\phi=(I-\Pi)l+\xi$, it suffices to show
\[
|||\xi-\xi_J|||_K \leq C_l\frac{2 b_1}{\sqrt{a_0}} \eta_K^{J+2}.
\]
In fact,  the proof of Lemma \ref{singlethm} implies that
\begin{eqnarray*}
\begin{split}
&|||\xi-\xi_J|||_K \leq  2  \|\frac{k_1}{\sqrt{kk_0}}\|_{L^{\infty}(K)} \eta_K^{J+1} \|\sqrt{k_0} \nabla l\|_{0, K}\\
&\leq 2\|{k_1\over k_0}\sqrt{{k_0\over k}}\|_{L^{\infty}(K)}\eta_K^{J+1} \|\sqrt{k_0}\|_{L^{\infty}(K)}\|\nabla l\|_{0,K}
= C_l\frac{2 b_1}{\sqrt{a_0}} \eta_K^{J+2}.
\end{split}
\end{eqnarray*}
This completes the proof.

\end{document}